\documentclass[dvipsnames,version=final]{amsart}

\usepackage{todonotes}
\usepackage[T1]{fontenc}
\usepackage[utf8]{inputenc}
\usepackage[english]{babel}
\usepackage{geometry}
\usepackage{amsmath}
\usepackage{amsfonts}
\usepackage{amssymb}
\usepackage{booktabs}
\usepackage{paralist}
\usepackage{array}
\usepackage{xy}
\usepackage{multicol}
\usepackage{multirow}
\usepackage{fancyhdr}
\usepackage{hyperref}
\usepackage{wrapfig}
\usepackage{bm}
\usepackage{listings}
\usepackage{enumerate}
\usepackage{multirow}
\usepackage{placeins}
\usepackage{pgf}
\usepackage{tikz}
\usepackage{mathtools}
\usepackage{algorithm}
\usepackage{algpseudocode}
\usepackage{dsfont}
\usepackage{bbm}
\usepackage{subcaption}
\usepackage{cleveref}

	% working?

\DeclareMathOperator{\Off}{Off}
	% block diagonal matrix
 	% vector span
 % matrix range
\DeclareMathOperator{\trace}{tr} 	% matrix trace
\DeclareMathOperator{\dist}{d} 		% distance
	% orthogonal basis or factor [currently unused]

	% star is used to allow having subscripts underneath
\DeclareMathOperator{\col}{col}
\DeclareMathOperator{\traceP}{\mathcal{T}}

\DeclareMathOperator{\Li}{Li}
\DeclareMathOperator{\erf}{erf}
\DeclareMathOperator{\inverf}{inverf}

\newcommand{\poly}{\Pi}				% space of polynomials

\newcommand{\R}{\mathbb{R}}

\newcommand{\Z}{\mathbb{Z}}%
\newcommand{\N}{\mathcal{N}}
	% expected value
\newcommand{\prob}{\mathbb{P}}	% probability
\renewcommand{\O}{\mathcal{O}}
\newcommand{\graph}{\mathcal{G}}
\renewcommand{\vec}[1]{\boldsymbol{#1}}		% for bold vectors

\newcommand{\pp}{\nolinebreak\hspace{-.05em}\raisebox{.4ex}{\tiny\bf +}\nolinebreak\hspace{-.10em}\raisebox{.4ex}{\tiny\bf +}}

\lstset{
	language = Octave,
	identifierstyle=\color{black},
	basicstyle={\small\ttfamily},
	tabsize=4,
	showstringspaces=false,
	stringstyle=\bfseries\color{red!60!black},
	commentstyle=\color{green!60!black},
	keywordstyle=\bfseries\color{blue!60!black}
   }

\newcommand{\E}{\mathbb{E}}
\newcommand{\Var}{\mathbb{V}}
\newcommand{\V}{V}

%%%%%%%%% TikZ/PGFplots setup %%%%%%%%%%%%%%%%%
\usepackage{pgfplots}
\usepgfplotslibrary{groupplots}
\usetikzlibrary{external}

%\pgfkeys{/pgf/images/include external/.code={\includegraphics{#1}}}

\usetikzlibrary{decorations.markings}
\makeatletter
\tikzset{
  nomorepostactions/.code={\let\tikz@postactions=\pgfutil@empty},
  mymark/.style 2 args={decoration={markings,
    mark= between positions 0 and 1 step (1/9)*\pgfdecoratedpathlength with{%
        \tikzset{#2,every mark}\tikz@options
        \pgfuseplotmark{#1}%
      },  
    },
    postaction={decorate},
    /pgfplots/legend image post style={
        mark=#1,mark options={#2},every path/.append style={nomorepostactions}
    },
  },
}
\makeatother

\pgfplotscreateplotcyclelist{list_scaling_n}{%
Sepia,line width=1.25pt, mark=square, solid, mark size=2.5pt\\
BurntOrange,line width=1.25pt, mark=o, solid, mark size=2.5pt\\
OliveGreen,line width=1.25pt, mark=diamond, solid, mark size=2.5pt\\
Cerulean,line width=1.25pt, mark=x, dashed, mark size=2.5pt,mark options={solid}\\
Orchid,line width=1.25pt, mark=triangle, dashed, mark size=2.5pt,mark options={solid}\\
}

\pgfplotscreateplotcyclelist{list_matvecs}{%
Cerulean,line width=1.25pt, mark=x, dashed, mark size=2.5pt,mark options={solid}\\
Orchid,line width=1.25pt, mark=triangle, dashed, mark size=2.5pt,mark options={solid}\\
WildStrawberry,line width=1.25pt, mark=pentagon, dash dot, mark size=2.5pt,mark options={solid}\\
SpringGreen,line width=1.25pt, mark=Mercedes star, dash dot, mark size=2.5pt,mark options={solid}\\
Sepia,line width=1.25pt, mark=square, solid, mark size=2.5pt\\
BurntOrange,line width=1.25pt, mark=o, solid, mark size=2.5pt\\
OliveGreen,line width=1.25pt, mark=diamond, solid, mark size=2.5pt\\
}

\pgfplotscreateplotcyclelist{list_eig}{%
JungleGreen, line width=1.25pt, solid\\
Magenta, line width=1.25pt, dashed\\
Violet, line width=1.25pt, dash dot\\
}

\pgfplotsset{compat=1.16}
\usepgfplotslibrary{groupplots}

\usepgfplotslibrary{external} 
\tikzexternalize[prefix=./tikzfigures/]

\newtheorem{theorem}{Theorem}[section]
\newtheorem{lemma}[theorem]{Lemma}
\newtheorem{proposition}[theorem]{Proposition}

\theoremstyle{definition}
\newtheorem{definition}[theorem]{Definition}

\theoremstyle{remark}
\newtheorem{remark}[theorem]{Remark}

\numberwithin{equation}{section}

\begin{document}

\title[Analysis of stochastic probing]{Analysis of stochastic probing methods for estimating the trace of functions of sparse symmetric matrices}

\author[A.~Frommer]{Andreas Frommer}
\address{School of Mathematics and Natural Sciences, Bergische Universit\"at Wuppertal, 42097 Wuppertal, Germany}
\email{frommer@uni-wuppertal.de}

\author[M.~Rinelli]{Michele Rinelli}
\address{Scuola Normale Superiore, Piazza dei Cavalieri, 7, 56126 Pisa, Italy}
\email{michele.rinelli@sns.it}

\author[M.~Schweitzer]{Marcel Schweitzer}
\address{School of Mathematics and Natural Sciences, Bergische Universit\"at Wuppertal, 42097 Wuppertal, Germany}
\email{marcel@uni-wuppertal.de}

\subjclass[2020]{Primary 65C05, 68W20; Secondary 65F50, 65F60}

\date{}

\dedicatory{}

\keywords{trace estimation, probing, graph coloring, variance reduction}

\begin{abstract}
We consider the problem of estimating the trace of a matrix function $f(A)$. In certain situations, in particular if $f(A)$ cannot be well approximated by a low-rank matrix, combining probing methods based on graph colorings with stochastic trace estimation techniques can yield accurate approximations at moderate cost. So far, such methods have not been thoroughly analyzed, though, but were rather used as efficient heuristics by practitioners. In this manuscript, we perform a detailed analysis of stochastic probing methods and, in particular, expose conditions under which the expected approximation error in the stochastic probing method scales more favorably with the dimension of the matrix than the error in non-stochastic probing. Extending results from [E.\ Aune, D.\ P.\ Simpson, J.\ Eidsvik, Parameter estimation in high dimensional Gaussian distributions, Stat.\ Comput., 24, pp.\ 247--263, 2014], we also characterize situations in which using just one stochastic vector is always---not only in expectation---better than the deterministic probing method. Several numerical experiments illustrate our theory and compare with existing methods. 
\end{abstract}

\maketitle

\section{Introduction}
Estimating the trace of an implicitly given matrix $B \in \R^{n \times n}$, 
\begin{equation}\label{eq:trB}
\trace(B) = \sum\limits_{i=1}^n [B]_{ii},
\end{equation}
is an important task in many areas of applied mathematics and computer science, with applications in, e.g., machine learning and data science~\cite{han2015large,williams2006gaussian}, theoretical particle physics~\cite{sexton1994systematic,thron1998pade} and analysis of complex networks~\cite{benzi2020matrix,estrada2010network}; see~\cite{ubaru2018applications} for an extensive survey. In many of these applications, we have $B = f(A)$, where $A \in \R^{n \times n}$ is a large and sparse (or structured) matrix. In this case, $B$ cannot be formed explicitly and often the only feasible operations with $A$ are matrix-vector products (and sometimes linear system solves). This then translates into methods for approximating~\eqref{eq:trB} that must also rely on matrix-vector products or quadratic forms with $B$, which are, e.g., performed by applying a polyomial (or rational) Krylov subspace method or a Chebyshev expansion for approximating $f(A)\vec v$ or $\vec v^T\!f(A)\vec v$; hence the name \emph{implicit} or \emph{matrix-free} trace estimation methods.

Popular methods for this task come in two flavors: On the one hand, there are stochastic estimators, with the most important ones being the classical Hutchinson estimator as established workhorse algorithm~\cite{hutchinson1990stochastic} and the recently proposed Hutch\pp{} algorithm~\cite{MMMW21-Hutch++} and its refined variants~\cite{chen2022krylov,PerssonCortinovisKressner21} as more modern examples. These estimators are black-box methods for~\eqref{eq:trB} which do not exploit any inherent structure in $B$ and mainly rely on multiplying $B$ to randomly sampled vectors. On the other hand, when $B = f(A)$ with sparse $A$, a popular other class of methods are estimators based on \emph{probing}~\cite{Frommer2021,stathopoulos2013hierarchical,tang2012probing}, a technique which aims to reduce the number of quadratic forms that are needed for a given accuracy by exploiting structure in $A$ to carefully craft specific probing vectors instead of just randomly sampling them.

In this work, we consider the combination of probing techniques with stochastic estimators which is obtained by first finding a suitable (nonzero) pattern for the probing vectors and then filling it with random samples from an appropriate distribution. The combination of these two approaches is algorithmically quite straightforward and has already been used before by practitioners; see, e.g.,~\cite{aune2014parameter,babich2012exploring,morningstar2011improved,whyte2022optimizing} for applications in Gaussian processes and theoretical particle physics. However, a general treatment and in-depth analysis is lacking so far. We perform such an analysis in this paper. Our analysis explains and highlights many of the important properties of the approach and reveals in particular for which matrix functions $f$ and matrices $A$ large gains can not only be expected, but are actually guaranteed. Our theoretical findings are illustrated and confirmed by a variety of numerical experiments. As a by-product of our analysis, we also refine classical results on sign patterns in the entries of $f(A)$.

\subsection{Outline}
This paper is organized as follows. In Section~\ref{sec:methods}, we recall the two basic trace estimation methods, namely probing and the Hutchinson estimator. Section~\ref{sec:stochastic_probing} then treats how to combine these methods, starting with an algorithmic description and then deriving formulas for the variance of the resulting estimators as well as more sophisticated tail bounds. We show that in certain situations, the expected value of the error increases only with the square root of the matrix dimension $n$, whereas in classical (deterministic) probing the error scales linearly with $n$. We conclude this section by discussing conditions on the sign pattern in $f(A)$ under which the stochastic probing method using just a single sample is already \emph{guaranteed} to improve upon the classical probing method. 
We then brief\/ly discuss the relation of the stochastic probing approach to other established variance reduction techniques---namely the recently proposed Hutch\pp{} and XTrace algorithms based on low-rank approximation of $f(A)$---in Section~\ref{sec:hutchpp}. In Section~\ref{sec:numerical_experiments}, we present a number of numerical experiments illustrating our theoretical results and the performance of the stochastic probing method, both on academic and on real-world problems. 
Concluding remarks and topics for future research are given in Section~\ref{sec:conclusions}.

\subsection{Notation}
For a symmetric matrix $A\in\R^{n\times n}$, we denote by $\graph(A)=(V,E)$ the undirected graph associated with it, so $V=\{ 1,\dots, n \}$ and $E=\{ (i,j): i\neq j, [A]_{ij}\neq 0 \}$. The geodesic distance $\dist(i,j)$ in $\graph(A)$ is the length of the shortest path in $\graph(A)$ connecting $i$ to $j$, where $\dist(i,j)=0$ if $i=j$. 
We denote the entry at position $(i,j)$ of a matrix $M \in \R^{n \times n}$ by $[M]_{ij}$, and for $\mathcal{I} \subseteq \{1,\dots,n\}$ an index set, we denote by $[M]_{\mathcal{I}}$ the submatrix $M$ built from the rows and columns with index in $\mathcal{I}$. The spectrum $\sigma(M)$ of a square matrix $M$ is the set of all its eigenvalues. We denote by $\|\cdot\|_2$ the Euclidean vector norm (and the spectral matrix norm it induces) and by $\|\cdot\|_F$ the Frobenius matrix norm. For a random variable $X$, we denote by $\E[X]$ and $\Var[X]$ its expected value and variance, respectively.

\section{Review of basic methods}\label{sec:methods}
In this section, we review the two basic trace estimation methods which form the basis of the stochastic probing method that we discuss in Section~\ref{sec:stochastic_probing}.

\subsection{Probing methods}\label{subsec:probing}

Probing methods for trace (or diagonal) estimation were introduced in~\cite{tang2012probing} and later refined, extended and analyzed in, e.g.,~\cite{Frommer2021,laeuchli2020extending,stathopoulos2013hierarchical}. The basic idea of probing methods is based on the observation that the magnitude of the entries of most matrix functions $f(A)$ exhibits an (often exponential) decay away from the sparsity pattern of $A$; see, e.g.,~\cite{Benzi99,BenziRinelli,Demko84,Frommer18,Frommer2021,Schweitzer2022a} and the references therein. 

This observation motivates to construct \emph{probing vectors} 
\begin{equation}\label{eq:probing_vectors}
    \vec v_\ell = \sum_{i\in C_\ell} \vec e_i,\quad \ell=1,\dots,m,
\end{equation}
where $\vec e_i$ denotes the $i$th vector of the canonical basis of $\R^n$ and the sets $C_\ell, \ell = 1,\dots,m$ are a partitioning of the index set $V = \{1,\dots,n\}$. We denote their sizes by $|C_\ell| =: n_\ell$. The (deterministic) probing approximation corresponding to~\eqref{eq:probing_vectors} is then given by 
\begin{equation}\label{eq:deterministic_probing_approximation}
    \traceP(f(A))=\sum_{\ell=1}^m \vec v_\ell^Tf(A) \vec v_\ell
\end{equation}
with error
\begin{equation}\label{eq:trace_probing_error_deterministic}
\trace(f(A)) - \traceP(f(A)) = -\sum_{\ell = 1}^m \sum_{\substack{i,j \in C_\ell \\ i \neq j}} [f(A)]_{ij}.
\end{equation}
One can see from \eqref{eq:trace_probing_error_deterministic} that the approximation is accurate when the entries $[f(A)]_{ij}$ are small in modulus for $i$ and $j$ in the same index set $C_\ell$. Therefore, if the entries of $f(A)$ decay away from the sparsity pattern of $A$, a typical approach is to construct the sets $C_1,\dots,C_m$ via a distance-$d$ coloring of $\graph(A)$ according to the following definition. 

\begin{definition} \label{def:distance-d-coloring}
Let $G=(V,E)$ be an undirected graph. If the mapping $\col:V\to \{1,\dots,m\}$ is such that $\col(i)\neq \col(j)$ whenever $\dist(i,j)\leq d$, the corresponding partitioning of $V$ given via $C_\ell =\{ i\in V:\col(i)=\ell \}$, $\ell=1,\dots,m$, is called a \emph{distance-$d$ coloring} (with $m$ colors) of $G$.
\end{definition}

Note that a distance-1 coloring is the standard node coloring of a graph, where any two adjacent nodes have different colors. The computation of a distance-$d$ coloring (in particular for large values of $d$) is a computationally demanding task in general, even if one does not aim for an optimal coloring. It is common practice to use a greedy approach for obtaining a suboptimal coloring with affordable computation cost~\cite{BenziRinelliSimunec2022,Frommer2021,SchimmelThesis}, but even then, the computation of the coloring is often the most expensive part of the overall probing method; see, e.g., the discussion in~\cite[Section~6]{BenziRinelliSimunec2022}. 

For special graph structures, colorings with a small but not necessarily minimal number of colors are known in closed form and therefore available cheaply. As these will play a role for our subsequent analysis, we recall them here.
\begin{proposition}[from Section~2 in~\cite{Frommer2021}]\label{pro:coloring_special_cases}
\phantom{a}

\begin{enumerate}
\item Let $A$ be $\beta$-banded, i.e., $[A]_{ij}=0$ for $|i-j|>\beta$. Then
a distance-$d$ coloring for $\graph(A)$ with $m=d\beta + 1$ colors is given by
\begin{equation}
    \label{eq:coloring_banded}
    \col(i) = (i-1) \, \operatorname{mod} \, (d\beta + 1) + 1,\quad i=1,\dots,n,
\end{equation}
and this coloring is optimal if all entries within the band of $A$ are nonzero.
\item Let $\graph(A)=(V,E)$ be a regular $D$-dimensional lattice of size $n_1 \times \dots \times n_D$ and let its nodes be labeled as $V=\{ v\in \Z^D: 0\leq [v]_k \leq n_k-1 \text{ for  $k=1,\dots,D$}\}$. Then a distance-$d$ coloring for $\graph(A)$ with $m = (d+1)^D$ colors is given by
\begin{equation}
    \label{eq:coloring_lattice}
    \col(v)=\left( \sum_{k=0}^{D-1}\widetilde{[v]}_k (d+1)^k \right)+1,\quad \text{where}\quad \widetilde{[v]}_k=[v]_k \text{ mod }(d+1).
\end{equation}
\end{enumerate}    
\end{proposition}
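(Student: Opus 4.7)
The plan in both parts is to verify the contrapositive of the distance-$d$ coloring condition: whenever $\col(i)=\col(j)$ with $i\neq j$, we have $\dist(i,j)\geq d+1$. The geometric content is that a lower bound on the graph distance can be read off from an appropriate ``coordinate distance,'' while the coloring encodes coordinates modulo $d+1$ (resp.\ modulo $d\beta+1$), so agreement in color forces any coordinate difference to be either zero or at least $d+1$ (resp.\ $d\beta+1$).

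For part (i), I would first observe that any edge $(i,j)\in E$ satisfies $|i-j|\leq \beta$, hence by induction on path length,
\[
\dist(i,j) \;\geq\; \left\lceil \frac{|i-j|}{\beta} \right\rceil.
\]
Now if $\col(i)=\col(j)$ with $i\neq j$, then $(i-1)\equiv (j-1) \pmod{d\beta+1}$, so $|i-j|$ is a positive multiple of $d\beta+1$ and in particular $|i-j|\geq d\beta+1$. Plugging into the bound above yields $\dist(i,j)\geq d+1$, as required. For optimality under the additional assumption that all band entries are nonzero, the bound becomes an equality because one can walk from $i$ to $j$ in $\lceil |i-j|/\beta\rceil$ steps of size $\beta$ (truncated at the endpoints). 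Consequently the indices $1,2,\dots,d\beta+1$ are pairwise at distance at most $d$, forming a clique of size $d\beta+1$ in the $d$-th power of $\graph(A)$, which forces the chromatic number of that power graph to be at least $d\beta+1$.

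For part (ii), I would first note that the coloring formula is well-defined and surjective onto $\{1,\dots,(d+1)^D\}$ because $\widetilde{[v]}_0,\dots,\widetilde{[v]}_{D-1}\in\{0,\dots,d\}$ are the digits of the base-$(d+1)$ representation of $\col(v)-1$. In a regular $D$-dimensional lattice with nearest-neighbor edges, the graph distance equals the $\ell^1$-distance,
\[
\dist(v,w) \;=\; \sum_{k=0}^{D-1} \bigl|[v]_k-[w]_k\bigr|.
\]
If $\col(v)=\col(w)$, uniqueness of the mixed-base representation gives $\widetilde{[v]}_k=\widetilde{[w]}_k$ for all $k$, i.e., $[v]_k\equiv [w]_k \pmod{d+1}$. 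Hence each nonzero coordinate difference $|[v]_k-[w]_k|$ is a multiple of $d+1$, and when $v\neq w$ at least one such difference is strictly positive, yielding $\dist(v,w)\geq d+1$.

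There is no real obstacle: both statements reduce to elementary number-theoretic and metric observations. The only subtle points to be careful about are (a) the integer lower bound $\lceil |i-j|/\beta\rceil$ for the banded case, and (b) identifying the base-$(d+1)$ digit structure behind \eqref{eq:coloring_lattice} to justify that distinct vertices with the same color differ by at least $d+1$ in some coordinate. Both are routine once set up correctly.
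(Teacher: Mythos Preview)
The paper does not supply its own proof of this proposition; it quotes the result from~\cite{Frommer2021} and moves on. So there is nothing to compare against directly, and your argument stands or falls on its own merits.

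Your verification is correct and is exactly the natural elementary route. A couple of minor remarks: in part~(ii) the claim that the coloring is surjective onto $\{1,\dots,(d+1)^D\}$ tacitly assumes each lattice dimension satisfies $n_k\geq d+1$; this is harmless because surjectivity is not needed for the distance-$d$ property, only the bound $m\leq (d+1)^D$ on the number of colors used. Likewise, your identification of the lattice graph distance with the $\ell^1$ distance is correct for the open (non-periodic) lattice the proposition describes; for a periodic lattice one would instead need $\dist(v,w)\geq \|v-w\|_1$ restricted modulo the side lengths, but the same conclusion follows. Neither point is a gap.
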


\begin{remark}
\phantom{a}
\begin{enumerate}
    \item The coloring \eqref{eq:coloring_lattice} is not optimal. For $D=2$, the optimal distance-$d$ coloring is explicitly known and needs $\left\lceil \frac{1}{2}(d+1)^2 \right\rceil$ colors. The regular structure of the coloring~\eqref{eq:coloring_lattice} allows an easier analysis of corresponding probing approximations, though, as discussed in \cite{Frommer2021,SchimmelThesis}. For $D > 2$, optimal distance-$d$ colorings of the lattice are not known explicitly for general $d > 1$.
    \item A different way of coloring regular lattices is used in the \emph{hierarchical probing} approach from~\cite{laeuchli2020extending,stathopoulos2013hierarchical}, which starts with a distance-1 coloring and recursively subdivides it. This typically results in a non-optimal coloring which uses more colors than a coloring computed by a greedy method but allows to reuse certain computational results if one detects that the probing approximation is not accurate enough and the distance for the coloring thus needs to be increased.
\end{enumerate}
\end{remark}

Based on the colorings from Proposition~\ref{pro:coloring_special_cases}, one can derive bounds for the error~\eqref{eq:trace_probing_error_deterministic} under the assumption that $f(A)$ has the exponential decay property
\begin{equation}\label{eq:exponential_decay}
    |[f(A)]_{ij}| \leq cq^{\dist(i,j)} 
\end{equation}
for all $i,j$, where $c>0$, $0\leq q <1$ are constants independent of $i,j$.

\begin{theorem}[Theorems~4.1 and~4.2 in~\cite{Frommer2021}] \label{thm:probing_bound}
\phantom{a}

\begin{enumerate}
\item Let $A\in\R^{n\times n}$ be $\beta$-banded such that $f(A)$ fulfills \eqref{eq:exponential_decay} and let $\traceP_d(f(A))$ be the probing approximation \eqref{eq:deterministic_probing_approximation} of $\trace(f(A))$ associated with the distance-$d$ coloring \eqref{eq:coloring_banded}. Then
\begin{equation*}
    |\trace(f(A))-\traceP_d(f(A))|
    \leq 
    n \cdot 2c\, \frac{q^d}{1-q^d}.
\end{equation*}
\item Let $A\in\R^{n\times n}$ be such that $\graph(A)$ is a regular $D$-dimensional lattice and $f(A)$ fulfills \eqref{eq:exponential_decay}. Let $\traceP_d(f(A))$ be the probing approximation of $\trace(f(A))$ associated with the distance-$d$ coloring \eqref{eq:coloring_lattice}. Then
    \begin{equation*}
        |\trace(f(A))-\traceP_d(f(A))|
        \leq 
        n \cdot 2c D\, \Li_{1-D}(q^{d}),
    \end{equation*}
    where $ \Li_{s}(z)=\sum_{k=1}^\infty \frac{z^i}{i^s}$ is the polylogarithm.
\end{enumerate}
\end{theorem}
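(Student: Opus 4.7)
The plan is to derive both bounds directly from the exact error expression~\eqref{eq:trace_probing_error_deterministic}. Taking absolute values and invoking the decay estimate~\eqref{eq:exponential_decay} gives
\begin{equation*}
|\trace(f(A)) - \traceP_d(f(A))| \;\leq\; c\sum_{i=1}^n \sum_{\substack{j:\;\col(j)=\col(i) \\ j\neq i}} q^{d(i,j)},
\end{equation*}
so in each case it suffices to bound the inner sum uniformly in $i$ by the announced per-row quantity; the global factor $n$ then arises from summing over $i=1,\dots,n$. The remaining work is to use the explicit structure of the two colorings in Proposition~\ref{pro:coloring_special_cases} to enumerate same-color neighbors of a fixed vertex together with their graph distances.

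For part~(1), two indices $i,j$ share a color under~\eqref{eq:coloring_banded} iff $|i-j|$ is a positive multiple $k(d\beta+1)$, and for each such $k\geq 1$ there are at most two such $j$'s, one on either side of $i$. Since any edge in $\graph(A)$ changes the index by at most $\beta$, one has $d(i,j)\geq\lceil|i-j|/\beta\rceil=\lceil kd+k/\beta\rceil\geq kd$. Summing the resulting geometric series gives $\sum_j q^{d(i,j)}\leq 2\sum_{k\geq 1}q^{kd}=2q^d/(1-q^d)$, and multiplication by $cn$ yields the desired bound.

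For part~(2), two lattice vertices $v,w$ share a color under~\eqref{eq:coloring_lattice} iff $[v]_k\equiv[w]_k\pmod{d+1}$ for every coordinate $k$. Writing the coordinate-wise differences as $[w]_k-[v]_k=(d+1)m_k$ with $m\in\Z^D$ and using that the graph distance in a regular $D$-dimensional lattice equals the $\ell^1$-distance, one has $d(v,w)=(d+1)\|m\|_1$. Grouping by $S=\|m\|_1\geq 1$ reduces matters to the enumeration $N_D(S):=|\{m\in\Z^D:\|m\|_1=S\}|$, for which I would invoke the elementary bound $N_D(S)\leq 2DS^{D-1}$ (a standard combinatorial estimate for integer points on the $\ell^1$-sphere; restricting to a finite box only reduces the count). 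This gives
\begin{equation*}
\sum_{\substack{w:\;\col(w)=\col(v) \\ w\neq v}} q^{d(v,w)} \;\leq\; \sum_{S=1}^\infty N_D(S)\,q^{(d+1)S} \;\leq\; 2D\sum_{S=1}^\infty S^{D-1}q^{(d+1)S} \;\leq\; 2D\,\Li_{1-D}(q^d),
\end{equation*}
where the last inequality uses the monotonicity of $\Li_{1-D}$ on $[0,1)$ together with $q^{d+1}\leq q^d$.

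The main technical hurdle is the combinatorial estimate $N_D(S)\leq 2DS^{D-1}$: the exponent $D-1$ is forced by the volume-growth rate of $\ell^1$-spheres in $\Z^D$, and the constant $2D$ is precisely what matches the polylogarithmic form of the final bound. Beyond that, both parts reduce to careful bookkeeping plus a geometric (respectively polylogarithmic) series, once the correct identification of same-color shells around a fixed vertex has been carried out.
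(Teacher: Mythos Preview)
Your argument is correct and is the same one the paper relies on. The paper itself does not prove this theorem (it is quoted from \cite{Frommer2021}), but the identical shell-counting reasoning appears explicitly in its proof of Proposition~\ref{prop:bounds_Vl_banded_grid}: for a fixed node, at most two same-color nodes per distance-$\gamma d$ shell in the banded case, and at most $2D\gamma^{D-1}$ in the lattice case, after which one sums a geometric (resp.\ polylogarithmic) series. The combinatorial estimate $N_D(S)\le 2DS^{D-1}$ that you flag as the main technical point is precisely the count the paper invokes from \cite[Section~4]{Frommer2021}, so your proposal matches the paper's approach and level of detail.
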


\begin{remark}
Note that when $D$ is a positive integer, $\Li_{1-D}(z)$ is a rational function and explicit representations are known, e.g.,
\begin{equation*}
    \Li_0(z) = \frac{z}{1-z}, \quad \Li_{-1}(z) = \frac{z}{(1-z)^2},\quad \Li_{-2}(z) = \frac{z+4z^2+z^3}{(1-z)^4}.
\end{equation*}
In all these cases $\Li_{1-D}(q^d)=\O(q^d)$ for large $d$.
\end{remark}

It is also possible to obtain an error bound that holds for general graphs $\graph(A)$ (i.e., without relying on a specific structure or the use of a specific coloring), based on polynomial approximation of $f$ over an interval $[a,b] \supset \sigma(A)$. To this purpose we introduce the notation
\begin{equation*}
    E_d(f,[a,b])=\min_{p_d\in \poly_d} \max_{x\in[a,b]}|f(x)-p_d(x)|
\end{equation*}
for the polynomial approximation error, where $\poly_d$ is the set of all polynomials with degree at most~$d$. Theorem 4.4 in \cite{Frommer2021} gives a bound for the case when $E_d(f,[a,b])$ decays exponentially. With essentially the same proof one can obtain the following general result.
\begin{theorem}\label{the:error_deterministic_probing_general}
Let $A\in\R^{n\times n}$ be symmetric with $\sigma(A)\subset[a,b]$. Let $\traceP_d(f(A))$ be the probing approximation of $\trace(f(A))$ associated with a distance-$d$ coloring of $\graph(A)$. Then
\begin{equation*}
    |\trace(f(A)) - \traceP_d(f(A))|
    \leq 
    2n \cdot E_d(f,[a,b]).
\end{equation*}
    
\end{theorem}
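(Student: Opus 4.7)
The plan is to reduce the bound for $f$ to a bound for the best polynomial approximant, by exploiting the fact that polynomials of degree at most $d$ in $A$ are exactly annihilated by the probing error formula when a distance-$d$ coloring is used. Concretely, let $p_d \in \poly_d$ be a best polynomial approximation of $f$ on $[a,b]$, so that $\max_{x\in[a,b]} |f(x) - p_d(x)| = E_d(f,[a,b])$. First I would establish the sparsity pattern of $p_d(A)$: using induction on the exponent $k$, $[A^k]_{ij} = 0$ whenever $\dist(i,j) > k$ in $\graph(A)$. Hence $[p_d(A)]_{ij} = 0$ for all $i \neq j$ with $\dist(i,j) > d$. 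By definition of a distance-$d$ coloring, any pair $i \neq j$ lying in the same color class $C_\ell$ satisfies $\dist(i,j) > d$, and therefore the probing error formula \eqref{eq:trace_probing_error_deterministic} yields
\[
\trace(p_d(A)) - \traceP_d(p_d(A)) = -\sum_{\ell=1}^m \sum_{\substack{i,j\in C_\ell \\ i \neq j}} [p_d(A)]_{ij} = 0.
\]

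Next, I would split the total error by adding and subtracting $p_d(A)$ inside both the exact trace and the probing functional, using the linearity of each:
\[
\trace(f(A)) - \traceP_d(f(A)) = \trace(R) - \traceP_d(R), \qquad R := f(A) - p_d(A),
\]
since the polynomial part cancels by the previous step. Now I would bound each term by the spectral norm of $R$. For the exact trace, $|\trace(R)| \leq n\|R\|_2$. For the probing functional, using that the $\vec v_\ell$ have disjoint supports of sizes $n_\ell$ with $\sum_\ell n_\ell = n$, I get
\[
|\traceP_d(R)| \leq \sum_{\ell=1}^m |\vec v_\ell^T R \vec v_\ell| \leq \sum_{\ell=1}^m \|\vec v_\ell\|_2^2 \,\|R\|_2 = n\,\|R\|_2.
\]
The triangle inequality then yields $|\trace(f(A)) - \traceP_d(f(A))| \leq 2n\|R\|_2$.

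Finally, since $A$ is symmetric with $\sigma(A) \subset [a,b]$, the spectral theorem (symmetric functional calculus) gives
\[
\|R\|_2 = \|f(A) - p_d(A)\|_2 = \max_{\lambda \in \sigma(A)} |f(\lambda) - p_d(\lambda)| \leq E_d(f,[a,b]),
\]
which combined with the previous display yields the desired bound. There is no real obstacle here: the only step requiring a small argument is the sparsity claim $[A^k]_{ij}=0$ for $\dist(i,j)>k$, and this is a routine induction using that the walks of length $k$ from $i$ to $j$ in $\graph(A)$ correspond to the nonzero contributions to $[A^k]_{ij}$.
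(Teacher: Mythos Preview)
Your proof is correct and follows essentially the same approach as the paper. The paper does not spell out the proof of this theorem, referring instead to the argument of Theorem~4.4 in~\cite{Frommer2021}; that argument, visible also in the proof of Proposition~\ref{prop:variance_constant_sign} in the present paper, is precisely yours: exploit that $\traceP_d(p_d(A)) = \trace(p_d(A))$ for the best degree-$d$ approximant $p_d$, reduce to the residual $R = f(A)-p_d(A)$, and bound both $|\trace(R)|$ and $|\traceP_d(R)|$ by $n\|R\|_2 \le n\,E_d(f,[a,b])$.
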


The numerical examples in Section~\ref{sec:numerical_experiments} and in~\cite[Section 6]{Frommer2021} illustrate that the error of the deterministic probing approximation indeed scales linearly with the size $n$, which suggests that $\O(n)$ error bounds as in Theorem~\ref{the:error_deterministic_probing_general} are the best we can achieve with this method. We will see that a better scaling is achieved via stochastic probing in Section~\ref{sec:stochastic_probing}.

\subsection{Hutchinson estimator}\label{subsec:hutchinson}
The Hutchinson estimator was first proposed in~\cite{girard1989fast} for estimating the trace of influence matrices in penalized least squares problems and in~\cite{hutchinson1990stochastic} for estimating $\trace(A^{-1})$, the trace of the inverse of a matrix. It can be applied in exactly the same way for any implicitly given matrix, though. It is based on the following fact: If $\vec x$ is a random vector with independent and identically distributed (i.i.d.) components such that $\E[\vec x]=0$ and $\E[\vec x \vec x^T]=I$, then $\E[\vec x^T B \vec x]=\trace (B)$. By sampling $N$ independent random vectors with this property, one obtains the Hutchinson estimator
\begin{equation*}
    \trace^{(H)}_N(B)=\frac{1}{N}\sum_{k=1}^N (\vec x^{(k)})^T B \vec x^{(k)}.
\end{equation*}
From the linearity of the expected value it follows that $\E[\trace^{(H)}_N(B)]=\trace(B)$. The accuracy of the approximation $\trace^{(H)}_N(B)$ depends on $N$ and the distribution of the entries of the sample vectors.
Two important examples are \emph{Rademacher vectors}, whose entries are i.i.d., taking the values $\pm1$ with equal probability $\tfrac{1}{2}$, or \emph{Gaussian vectors}, whose entries are i.i.d.\ according to the standard normal distribution. For these particular cases, the variance of the estimator is explicitly known; see, e.g.~\cite[Corollary~3.2]{FrommerMultilevel22}. For convenience we from now on use the notation $\Off(B)$ for the off-diagonal part of a matrix $B$.

\begin{theorem} \label{thm:variance_standard}
Let $B\in\R^{n\times n}$ be symmetric and let $\vec x^{(i)}, i = 1,\dots,N$ be independent random vectors with i.i.d.\ components.

\begin{enumerate}
    \item If the $\vec x^{(i)}$ are Rademacher vectors, then 
    \begin{equation*}
        \Var[\trace^{(H)}_N(B)] = \frac{2}{N}\|\Off(B)\|_F = \frac{2}{N}\sum_{\substack{i,j=1\\i\neq j}}^n|[B]_{ij}|^2.
    \end{equation*} 
    \item If the $\vec x^{(i)}$ are Gaussian vectors, then 
    \begin{equation*}
        \Var[\trace^{(H)}_N(B)]=\frac{2}{N}\|B\|_F^2 
        = \frac{2}{N}\sum_{i,j=1}^n|[B]_{ij}|^2..
    \end{equation*}
\end{enumerate}
\end{theorem}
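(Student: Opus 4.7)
The plan is to reduce the variance of the $N$-sample Hutchinson estimator to the variance of a single quadratic form using independence, and then to carry out a direct fourth-moment calculation that is essentially identical for the two cases, with the only difference being a single scalar (the fourth moment of the entries of $\vec x$).

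First I would use the fact that the samples $\vec x^{(i)}$ are independent to write
\begin{equation*}
\Var[\trace^{(H)}_N(B)] = \frac{1}{N^2}\sum_{k=1}^N \Var[(\vec x^{(k)})^T B \vec x^{(k)}] = \frac{1}{N}\Var[\vec x^T B \vec x],
\end{equation*}
where $\vec x$ is a generic sample vector. Since $\E[\vec x^T B \vec x] = \trace(B)$ is already known, the task reduces to computing $\E[(\vec x^T B \vec x)^2]$ and subtracting $\trace(B)^2$.

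Next, I would expand
\begin{equation*}
(\vec x^T B \vec x)^2 = \sum_{i,j,k,\ell} [B]_{ij}[B]_{k\ell}\, x_i x_j x_k x_\ell,
\end{equation*}
so that the computation comes down to classifying the fourth moments $\E[x_i x_j x_k x_\ell]$. Because the entries of $\vec x$ are i.i.d.\ with zero mean and unit variance, this expectation vanishes unless the four indices split into matching pairs. Letting $\mu_4 = \E[x_i^4]$, the only nonvanishing contributions come from (a) $i=j=k=\ell$, contributing $\mu_4$, and (b) the three ways to form two disjoint pairs among $\{i,j,k,\ell\}$ with the two members of each pair distinct from the other, each contributing $1$. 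Using the symmetry of $B$ and organizing the resulting sums, this yields
\begin{equation*}
\E[(\vec x^T B \vec x)^2] = \mu_4 \sum_{i}[B]_{ii}^2 + \sum_{\substack{i,k\\ i\neq k}}[B]_{ii}[B]_{kk} + 2\sum_{\substack{i,j\\ i\neq j}}[B]_{ij}^2.
\end{equation*}
Subtracting $\trace(B)^2 = \sum_i [B]_{ii}^2 + \sum_{i\neq k}[B]_{ii}[B]_{kk}$ gives the clean formula
\begin{equation*}
\Var[\vec x^T B \vec x] = (\mu_4 - 1)\sum_{i}[B]_{ii}^2 + 2\sum_{\substack{i,j\\ i\neq j}}[B]_{ij}^2.
\end{equation*}

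Finally, I would specialize: for Rademacher vectors $\mu_4 = 1$, so the diagonal term vanishes and only $2\|\Off(B)\|_F^2$ remains; for standard Gaussian vectors $\mu_4 = 3$, so the diagonal term becomes $2\sum_i [B]_{ii}^2$ and recombines with the off-diagonal sum into $2\|B\|_F^2$. Dividing by $N$ as in the first step delivers both claims. The only mildly delicate step is the combinatorial bookkeeping in the fourth-moment expansion, in particular making sure that the ``all four indices equal'' case is counted exactly once (with weight $\mu_4$) and not triple-counted when enumerating the three pair-partitions; writing the three partition classes with the explicit constraint that paired indices be distinct from each other handles this cleanly.
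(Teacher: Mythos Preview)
Your proof is correct. The paper itself does not prove this theorem; it simply states the result with a reference to \cite[Corollary~3.2]{FrommerMultilevel22}, so your direct fourth-moment computation supplies what the paper omits and is the standard argument one would expect here.
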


Similar results hold for non-symmetric $B$, involving $\Off(B+B^T)$.

A more accurate analysis of the Hutchinson approximation is possible by deriving tail bounds of the following form: given a target accuracy $\varepsilon>0$ and a failure probability $\delta$, find $N$ such that the $(\varepsilon,\delta)$ approximation
\begin{equation*}
    \prob(|\trace(B)-\trace^{(H)}_N(B)|\geq \varepsilon)\leq \delta
\end{equation*}
holds. A vast literature on the topic has been developed in the past years; see for instance~\cite{avron2011randomized,CortinovisKressner2021,roosta2015improved}. The following recent result is currently the one with the tightest tail bounds. 

\begin{theorem}[Theorem~1 and Corollary~1 in~\cite{CortinovisKressner2021}]\label{the:tailbound_cortinoviskressner}
    Let $B\in\R^{n\times n}$ be symmetric and let $\vec x^{(1)}, \dots, \vec x^{(N)}$ be independent random vectors.
    \begin{enumerate}
        \item If the $\vec x^{(i)}$ are Rademacher vectors, then 
        \begin{equation*}
        \mathbb{P}(|\trace(B) - \trace^{(H)}_N(B)| \geq \varepsilon) \leq 2 \exp\left(- \frac{N\varepsilon^2}{8\|\Off(B)\|_F^2 + 8\varepsilon\|\Off(B)\|_2}\right)
    \end{equation*}
    for every $\varepsilon > 0$. In particular, for $N \geq \frac{8}{\varepsilon^2}(\|\Off(B)\|_F^2 + \varepsilon\|\Off(B)\|_2) \log\frac{2}{\delta}$ it holds that $\mathbb{P}(|\trace(B) - \trace^{(H)}_N(B)| \geq \varepsilon) \leq \delta$.
    \item  If the $\vec x^{(i)}$ are Gaussian vectors, then
    \begin{equation*}
        \prob(|\trace(B)-\trace^{(H)}_N(B)|\geq \varepsilon) \leq 2\exp\left( -\frac{N\varepsilon^2}{4\|B\|_F^2+4\varepsilon\|B\|_2} \right)
    \end{equation*}
    for all $\varepsilon>0$. In particular, for $N\geq \frac{4}{\varepsilon^2}(\|B\|^2_F+\varepsilon\|B\|_2)\log\frac{2}{\delta}$ it holds that $\prob(|\trace(B)-\trace^{(H)}_N(B)|\geq \varepsilon)\leq \delta$.
    \end{enumerate}    
\end{theorem}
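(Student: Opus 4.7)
My plan is to regard the estimator as an empirical mean of $N$ i.i.d.\ centered quadratic forms $Y_k := (\vec x^{(k)})^T B \vec x^{(k)} - \trace(B)$ and to apply the Cram\'er--Chernoff method. The workhorse will be a Bernstein-type bound on the cumulant generating function $\varphi(s) := \log\E[e^{sY_1}]$ of the form $\varphi(s) \leq \alpha s^2/(1-\beta|s|)$ on some interval around $0$, with $(\alpha,\beta)$ depending on the Frobenius and spectral norms of either $B$ or $\Off(B)$. Given such a bound, independence yields $\log \E[\exp(s \sum_{k=1}^N Y_k)] \leq N \alpha s^2/(1-\beta|s|)$, and the standard sub-exponential Bernstein inequality then delivers $\prob(|\sum_k Y_k| \geq t) \leq 2\exp(-t^2/(4N\alpha + 4\beta t))$; substituting $t = N\varepsilon$ produces the fused tail bound appearing in the theorem.

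For the Gaussian case I would diagonalize $B = Q\Lambda Q^T$ and exploit the orthogonal invariance of $\mathcal{N}(0,I)$: setting $\vec y = Q^T \vec x$, we obtain $Y_1 = \sum_i \lambda_i (y_i^2-1)$ with i.i.d.\ $y_i \sim \mathcal{N}(0,1)$. Each centered chi-squared term has the explicit moment generating function $\E[e^{s(y_i^2-1)}] = e^{-s}/\sqrt{1-2s}$ for $s<1/2$, and the elementary inequality $-\log(1-u) - u \leq u^2/(2(1-u))$ for $u\in[0,1)$ yields $\log\E[e^{s\lambda_i(y_i^2-1)}] \leq s^2\lambda_i^2/(1-2|s\lambda_i|)$. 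Summing over $i$ and using $|s\lambda_i| \leq |s|\|B\|_2$ in the denominators gives $\varphi(s) \leq s^2\|B\|_F^2/(1-2|s|\|B\|_2)$, so that $\alpha = \|B\|_F^2$ and $\beta = 2\|B\|_2$, from which the constants $4\|B\|_F^2+4\varepsilon\|B\|_2$ in the theorem drop out.

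For the Rademacher case, expanding the quadratic form shows $Y_1 = \sum_{i \neq j}[B]_{ij} x_i x_j$, since $x_i^2 \equiv 1$ means the diagonal of $B$ contributes deterministically to $\trace(B)$---hence only $\Off(B)$ enters. This is a second-order Rademacher chaos, and the Hanson--Wright inequality, or equivalently a decoupling argument in which one conditions on half of the Rademachers and applies Hoeffding's inequality to the resulting linear form, produces a cumulant bound of the same Bernstein form, now with $(\alpha,\beta)$ proportional to $(\|\Off(B)\|_F^2,\|\Off(B)\|_2)$. The larger factor $8$ (versus $4$) in the final denominator reflects the loss incurred in this decoupling step. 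The sample-complexity corollary then follows by setting the right-hand side of the tail bound equal to $\delta$ and solving for $N$.

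The main obstacle is tracking the \emph{sharp} constants stated in the theorem rather than merely some universal constants. The Gaussian case is essentially a direct calculation once the right MGF estimate is in hand, but the Rademacher case is more delicate because the decoupling step typically introduces extra factors of $2$ that must be bookkept carefully. Since the precise statement is proved in~\cite{CortinovisKressner2021}, we would simply cite that reference rather than reproduce the constant-tracking here.
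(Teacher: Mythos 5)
This theorem is imported verbatim from Cortinovis--Kressner and the paper offers no proof of its own beyond the citation, so the only fair comparison is against the argument in that reference---which your sketch reconstructs essentially correctly. The overall architecture (reduce to the centered quadratic form $Y_1 = \vec x^T B\vec x - \trace(B)$, establish a Bernstein-type bound $\varphi(s)\leq \alpha s^2/(1-\beta|s|)$ on the cumulant generating function, tensorize over the $N$ independent samples, and apply the Cram\'er--Chernoff/sub-exponential Bernstein inequality) is exactly how the Gaussian case is proved there, and your chi-squared MGF computation with the inequality $-\log(1-u)-u\leq u^2/(2(1-u))$ is the right one. Two remarks. First, there is a factor-of-two slip in your generic Bernstein conclusion: with $\varphi(s)\leq \alpha s^2/(1-\beta|s|)$ the standard argument yields $\prob(|\sum_k Y_k|\geq t)\leq 2\exp\bigl(-t^2/(4N\alpha+2\beta t)\bigr)$, not $4\beta t$ in the denominator; only with the coefficient $2\beta t$ do your parameters $\alpha=\|B\|_F^2$, $\beta=2\|B\|_2$ reproduce the stated denominator $4\|B\|_F^2+4\varepsilon\|B\|_2$ after setting $t=N\varepsilon$ (your own final constants are consistent with the corrected version, so this is a bookkeeping typo rather than a wrong approach). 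Second, for the Rademacher case your reduction to $Y_1=\vec x^T\Off(B)\vec x$ is correct and explains why only $\Off(B)$ appears, but the explicit constants $8$ and $8$ in Cortinovis--Kressner do not come from generic Hanson--Wright or a naive decoupling (which would give unspecified universal constants); they require the specific Rademacher-chaos MGF estimate that is the content of their Theorem~2 (restated as Theorem~\ref{thm:tailbound_zerodiag} in this paper). Your decision to defer the constant-tracking to the citation is therefore the right call and matches what the paper itself does; the sample-complexity corollaries are indeed immediate from solving the tail bound for $N$.
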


 A consequence of these tail bounds is that in order to obtain an $(\varepsilon,\delta)$ approximation, we need $N=\O(\varepsilon^{-2}\log(\frac{1}{\delta}))$ samples. This asymptotic result is sharp as discussed in~\cite{wimmer2014optimal}. For favorable distributions of the eigenvalues of $B$ it is possible to obtain better asymptotic results by resorting to the more powerful Hutch\pp{} estimator~\cite{MMMW21-Hutch++}. We will come back to this point in Section~\ref{sec:hutchpp}. 

\section{Stochastic probing}\label{sec:stochastic_probing}
Stochastic probing methods combine the probing approach discussed in Section~\ref{subsec:probing} with the Hutchinson estimator from Section~\ref{subsec:hutchinson}. Stochastic probing has already been used under the name ``dilution''---restricted to distance $d=1$---for approximating the trace of the inverse in lattice quantum chromodynamics computations~\cite{babich2012exploring,morningstar2011improved} and for more general $d$ in~\cite{aune2014parameter,gambhir2017deflation}, but without a theoretical analysis.

\subsection{Description of the method}\label{subsec:stochastic_probing_method}
We assume that $C_1,\dots,C_m$ (with $|C_\ell| = n_\ell$) is a partition of $V = \{1,\dots,n\}$ associated with a distance-$d$ coloring of $\graph(A)$. We define a stochastic probing vector associated with $C_\ell$ as
\begin{equation}
    \label{eq:stochastic_probing_vectors}
    \vec w_\ell = \sum_{i\in C_\ell} X_i \vec e_i, \quad \ell = 1,\dots,m, 
\end{equation}
where $X_i$, $i\in C_\ell$, are i.i.d.\ random variables such that $\E[X_i]=0$ and $\E[X_i^2]=1$ for all $i\in C_\ell$. In the case of Rademacher variables we call $\vec w_\ell$ a \emph{Rademacher probing vector}, while in the case of Gaussian variables we call $\vec w_\ell$ a \emph{Gaussian probing vector}.   Basic properties of these stochastic probing vectors are summarized in the following proposition.
\begin{proposition} \label{prop:stochastic_probing_error}
    Let $A\in \R^{n\times n}$ be symmetric. Let a partition $C_1,\dots,C_m$ of $V = \{1,\dots,n\}$ associated with a distance-$d$ coloring of $\graph(A)$ be given and let $\vec w_\ell$ be a stochastic probing vector associated with $C_\ell$, $\ell=1,\dots,m,$ as defined in \eqref{eq:stochastic_probing_vectors}. Then
    \begin{equation}
        \label{eq:identity_quadform_generic}
        \vec w_\ell^T f(A) \vec w_\ell 
        =
        \sum_{k\in C_\ell} X_k^2[f(A)]_{kk} + \sum_{\substack{i,j\in C_\ell \\ i\neq j}} X_i X_j [f(A)]_{ij},
    \end{equation}
    and thus 
    \begin{equation}
        \label{eq:quadform_expectation}
        \E[\vec w_\ell^T f(A) \vec w_\ell] = \trace([f(A)]_{C_\ell}) = \sum_{k\in C_\ell} [f(A)]_{kk}.
    \end{equation}
    
\end{proposition}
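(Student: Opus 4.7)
The plan is to prove both identities by direct expansion of the quadratic form using the definition of the stochastic probing vector, followed by a linearity-of-expectation argument that exploits the assumed moment properties and independence of the random weights.

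First I would expand the left-hand side of \eqref{eq:identity_quadform_generic}. Substituting the definition $\vec w_\ell = \sum_{i\in C_\ell} X_i \vec e_i$ into $\vec w_\ell^T f(A) \vec w_\ell$ and using $\vec e_i^T f(A) \vec e_j = [f(A)]_{ij}$ yields
\[
    \vec w_\ell^T f(A) \vec w_\ell
    = \sum_{i,j \in C_\ell} X_i X_j\, [f(A)]_{ij}.
\]
Splitting the double sum into the terms with $i=j$ and those with $i\neq j$ immediately gives \eqref{eq:identity_quadform_generic}. (Here no structural property of $A$ or of the coloring is used; the identity is purely algebraic.)

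Next I would take expectations on both sides of \eqref{eq:identity_quadform_generic}. By linearity,
\[
    \E[\vec w_\ell^T f(A) \vec w_\ell]
    = \sum_{k\in C_\ell} \E[X_k^2]\,[f(A)]_{kk}
    + \sum_{\substack{i,j\in C_\ell \\ i\neq j}} \E[X_i X_j]\,[f(A)]_{ij}.
\]
By the standing assumption $\E[X_k^2]=1$, the diagonal sum reduces to $\sum_{k\in C_\ell}[f(A)]_{kk}$. For the off-diagonal terms, the $X_i$ with $i\in C_\ell$ are independent and have zero mean, so $\E[X_iX_j]=\E[X_i]\E[X_j]=0$ whenever $i\neq j$, and the off-diagonal contribution vanishes. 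Recognizing $\sum_{k\in C_\ell}[f(A)]_{kk}$ as $\trace([f(A)]_{C_\ell})$ gives \eqref{eq:quadform_expectation}.

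There is no real obstacle in this proof: the result is essentially a restatement of the standard unbiasedness of the Hutchinson quadratic form, applied to the principal submatrix $[f(A)]_{C_\ell}$ after noting that only the entries of $f(A)$ indexed by $C_\ell\times C_\ell$ interact with $\vec w_\ell$. The only thing worth emphasizing in the write-up is that the distance-$d$ coloring plays no role at this stage; it will enter later, via the fact that for $i\neq j$ in the same color class the entries $[f(A)]_{ij}$ are small, which is what makes the bias of the underlying deterministic probing estimate small and hence turns \eqref{eq:quadform_expectation} into a useful building block for the overall stochastic probing estimator.
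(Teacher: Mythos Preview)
Your proof is correct and follows exactly the same approach as the paper's own proof: direct expansion of the quadratic form from the definition of $\vec w_\ell$, followed by linearity of expectation together with $\E[X_k^2]=1$ and $\E[X_iX_j]=\E[X_i]\E[X_j]=0$ for $i\neq j$. Your version is simply more explicit, and your remark that the distance-$d$ coloring plays no role here is accurate.
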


\begin{proof}

    The identity \eqref{eq:identity_quadform_generic} follows trivially from the expression \eqref{eq:stochastic_probing_vectors} that defines $\vec w_\ell$. Since $X_i$ and $X_j$ are independent for $i\neq j$ we get $\E[X_i X_j] = \E[X_i] \E[X_j] = 0$, and since $\E[X_k^2]=1$ for all $k$, we obtain~\eqref{eq:quadform_expectation}.
\end{proof}

 In the spirit of the Hutchinson estimator, we can sample $N_\ell$ probing vectors $\vec w_\ell^{(1)},\dots, \vec w_\ell^{(N_\ell)}$ associated with $C_\ell$ and obtain the approximation
\begin{equation*}
   \frac{1}{N_\ell}\sum_{s=1}^{N_\ell} (\vec w_\ell^{(s)})^T f(A) \vec w_\ell^{(s)}
   \approx 
   \trace([f(A)]_{C_\ell})
\end{equation*}
for the ``partial trace'' belonging to color $\ell$. To approximate the full trace, we sum estimates for all the partial traces according to the following definition.

\begin{definition} \label{stochastic_probing:def} Let $C_\ell, \ell = 1,\ldots,m$, be a partitioning of $V=\{1,\ldots,n\}$ and let the number of samples for each $\ell$ be collected in the $m$-tuple $\N=(N_1,\dots,N_m)$. Then the \emph{stochastic probing estimator} of $\trace(f(A))$ is given by
\begin{equation}
    \label{eq:stochastic_probing_definition}
    \traceP_{d}^{\N}(f(A))
    :=
    \sum_{\ell = 1}^m \frac{1}{N_\ell} \sum_{s=1}^{N_\ell} (\vec w_\ell^{(s)})^T f(A) \vec w_\ell^{(s)}.
\end{equation}
\end{definition}

In view of \eqref{eq:quadform_expectation}, we have
\begin{equation*}
    \E[\traceP_d^{\N}(f(A))]
    =
    \trace(f(A)).
\end{equation*}

\subsection{Variance of the stochastic probing estimator}\label{subsec:variance_stochastic_probing}

From basic properties of the variance, we have
\begin{equation}
    \label{eq:variance_stochastic_probing_generic}
    \Var[\traceP_d^{\N}(f(A))]=\sum_{\ell=1}^m \V_\ell/N_\ell, \quad \mbox{ where } \V_\ell :=\Var[\vec w_\ell^T f(A) \vec w_\ell].
\end{equation}
The individual variances $V_\ell$ can be given explicitly.
\begin{theorem} \label{thm:variance_stochastic_probing_quadforms}
    Let $A\in \R^{n\times n}$ be symmetric, let a partition $C_1, \dots C_m$ of $V = \{1,\dots,n\}$ be given and let $\vec w_\ell$ be a stochastic probing vector associated with $C_\ell, \ell \in \{1,\dots,m\}$, as defined in~\eqref{eq:stochastic_probing_vectors}. 
\begin{enumerate}
    \item If $\vec w_\ell$ is a Rademacher probing vector, then
    \begin{equation}
        \label{eq:variance_color_Rademacher}
        \V_\ell = 2\|\Off([f(A)]_{C_\ell})\|_F^2 = 2\sum_{\substack{i,j\in C_\ell \\ i\neq j}}|[f(A)]_{ij}|^2.
    \end{equation}
    \item If $\vec w_\ell$ is a Gaussian probing vector, then
    \begin{equation}
        \label{eq:variance_color_Gaussian}
        \V_\ell = 2\|[f(A)]_{C_\ell}\|_F^2 =
        2\sum_{i,j\in C_\ell}|[f(A)]_{ij}|^2.
    \end{equation}
\end{enumerate}
    
\end{theorem}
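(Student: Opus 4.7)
The plan is to reduce the variance computation for the stochastic probing vector to the standard Hutchinson variance formula recalled in Theorem~\ref{thm:variance_standard}. The key observation is that $\vec w_\ell$ is zero outside the index set $C_\ell$, so if $\vec x^{(\ell)} \in \R^{n_\ell}$ denotes the restriction of $\vec w_\ell$ to $C_\ell$, then $\vec x^{(\ell)}$ is exactly a standard Rademacher (respectively Gaussian) random vector in dimension $n_\ell$, and
\begin{equation*}
    \vec w_\ell^T f(A) \vec w_\ell \;=\; (\vec x^{(\ell)})^T [f(A)]_{C_\ell}\, \vec x^{(\ell)}.
\end{equation*}
Since $A$ is symmetric, so is $f(A)$, and hence so is the principal submatrix $[f(A)]_{C_\ell}$. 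Applying Theorem~\ref{thm:variance_standard} with $N=1$ and $B := [f(A)]_{C_\ell}$ then immediately yields~\eqref{eq:variance_color_Rademacher} in the Rademacher case and~\eqref{eq:variance_color_Gaussian} in the Gaussian case.

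If a direct computation is preferred, I would start from~\eqref{eq:identity_quadform_generic} and~\eqref{eq:quadform_expectation} to write
\begin{equation*}
    \vec w_\ell^T f(A) \vec w_\ell - \E[\vec w_\ell^T f(A) \vec w_\ell] \;=\; \sum_{k \in C_\ell} (X_k^2 - 1)[f(A)]_{kk} + \sum_{\substack{i,j \in C_\ell \\ i \neq j}} X_i X_j [f(A)]_{ij},
\end{equation*}
and then expand the square. In the Rademacher case the diagonal piece vanishes identically because $X_k^2 = 1$, and the expectation $\E[X_i X_j X_k X_l]$ with $i \neq j,\ k \neq l$ is nonzero (and equal to $1$) exactly when the unordered pairs $\{i,j\}$ and $\{k,l\}$ agree; using symmetry of $f(A)$ to identify $[f(A)]_{ij}[f(A)]_{ji} = [f(A)]_{ij}^2$ gives~\eqref{eq:variance_color_Rademacher}. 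In the Gaussian case, $\Var[X_k^2] = 2$ and independence of the diagonal $X_k^2$ across $k$ produce a contribution $2\sum_{k \in C_\ell} [f(A)]_{kk}^2$; Isserlis' theorem gives the same pairing structure for the off-diagonal piece as in the Rademacher case, yielding $2\|\Off([f(A)]_{C_\ell})\|_F^2$; and the cross-covariance between the diagonal and off-diagonal pieces vanishes because $\E[(X_k^2 - 1)X_i X_j] = 0$ either by independence (when $k \notin \{i,j\}$) or by vanishing odd Gaussian moments (when $k \in \{i,j\}$). Summing the two contributions recovers $2\|[f(A)]_{C_\ell}\|_F^2$.

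The proof is essentially bookkeeping and no substantive obstacle is expected; the only subtle point that merits care is the vanishing of the Gaussian diagonal--off-diagonal cross term, which relies on the fact that odd moments of centered Gaussians vanish.
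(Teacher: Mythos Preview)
Your primary argument is correct and is exactly the paper's proof: reduce $\vec w_\ell^T f(A)\vec w_\ell$ to the quadratic form $([\vec w_\ell]_{C_\ell})^T [f(A)]_{C_\ell} [\vec w_\ell]_{C_\ell}$ and invoke Theorem~\ref{thm:variance_standard} with $N=1$ on the symmetric submatrix $[f(A)]_{C_\ell}$. The direct computation you add is a fine alternative but is not needed.
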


\begin{proof}
For any $\ell = 1,\dots,m$ we have $\vec w_\ell^T f(A) \vec w_\ell = ([\vec w_\ell]_{C_\ell})^T [f(A)]_{C_\ell} [\vec w_\ell]_{C_\ell}$. In view of Theorem~\ref{thm:variance_standard}, if $\vec w_\ell$ is a Rademacher probing vector we get
    \begin{equation*}
        \Var[\vec w_\ell^T f(A) \vec w_\ell] = \Var[([\vec w_\ell]_{C_\ell})^T [f(A)]_{C_\ell} [\vec w_\ell]_{C_\ell}] = \| \Off([f(A)]_{C_\ell})\|. 
    \end{equation*}
    Similarly, if $\vec w_\ell$ is a Gaussian probing vector, then
    \begin{equation*}
        \Var[\vec w_\ell^T f(A) \vec w_\ell] = \Var[([\vec w_\ell]_{C_\ell})^T [f(A)]_{C_\ell} [\vec w_\ell]_{C_\ell}] = \| [f(A)]_{C_\ell}\|_F. 
    \end{equation*}
    This concludes the proof. 
\end{proof}

\begin{remark}
The diagonal entries of $f(A)$ are present in the summation in~\eqref{eq:variance_color_Gaussian}, but not in~\eqref{eq:variance_color_Rademacher}. This suggests---and is actually confirmed by numerical experiments not reported here---that if a decay away from the sparsity pattern of $A$ is present in $f(A)$, then the variance is much smaller in the Rademacher case, since the diagonal entries will dominate in~\eqref{eq:variance_color_Gaussian}.  From now on, we will therefore only consider the Rademacher distribution for our analysis and experiments.
\end{remark}

From~\eqref{eq:variance_stochastic_probing_generic} and Theorem~\ref{thm:variance_stochastic_probing_quadforms}, we obtain the variance of the Rademacher probing approximation. We immediately state it for distance-$d$ colorings, although it also holds for a general partitioning $C_1, \dots C_m$.

\begin{lemma}
    \label{lem:variance_Rademacher_probing}
    Let $A\in \R^{n\times n}$ be symmetric and let $C_\ell, \ell = 1,\ldots,m,$ be a distance-$d$ coloring of $\graph(A)$. Further, let $\N=(N_1,\dots,N_m)$, let $\vec w_\ell^{(1)},\dots,\vec w_\ell^{(N_\ell)}$ be Rademacher probing vectors associated with $C_\ell$, and let $\traceP^{\N}_d(f(A))$ be the corresponding Rademacher probing approximation~\eqref{eq:stochastic_probing_definition}. Then, with $V_\ell$ from \eqref{eq:variance_color_Rademacher},
    \begin{equation*}
        \Var[\traceP_d^{\N}(f(A))]
        =
        \sum_{\ell=1}^m\frac{\V_\ell}{N_\ell}
        =
        2\sum_{\ell=1}^m \frac{1}{N_\ell} \|\Off([f(A)]_{C_\ell})\|_F^2.
    \end{equation*}
  
\end{lemma}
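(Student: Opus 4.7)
The plan is to combine the independence structure built into Definition~\ref{stochastic_probing:def} with the per-color variance identity already established in Theorem~\ref{thm:variance_stochastic_probing_quadforms}. Since the sample vectors $\vec w_\ell^{(s)}$ are constructed independently across both the color index $\ell$ and the sample index $s$, the quadratic forms $(\vec w_\ell^{(s)})^T f(A) \vec w_\ell^{(s)}$ are mutually independent random variables. This makes the variance computation a routine exercise in applying the variance-of-a-sum-of-independent-random-variables rule; the content of the lemma is really just a bookkeeping statement combining~\eqref{eq:variance_stochastic_probing_generic} with~\eqref{eq:variance_color_Rademacher}, so no new technical machinery is needed.

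First I would expand the estimator by definition:
\[
\traceP_d^{\N}(f(A)) = \sum_{\ell = 1}^m \frac{1}{N_\ell} \sum_{s=1}^{N_\ell} (\vec w_\ell^{(s)})^T f(A) \vec w_\ell^{(s)}.
\]
Next, using independence of the summands (since samples within a color are i.i.d.\ and samples from different colors are drawn independently), I would invoke linearity of variance across independent terms to get
\[
\Var[\traceP_d^{\N}(f(A))] = \sum_{\ell = 1}^m \frac{1}{N_\ell^2} \sum_{s=1}^{N_\ell} \Var\bigl[(\vec w_\ell^{(s)})^T f(A) \vec w_\ell^{(s)}\bigr].
\]

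Since the $N_\ell$ samples associated with color $\ell$ are identically distributed, each of their variances equals $V_\ell = \Var[\vec w_\ell^T f(A) \vec w_\ell]$, so the inner sum collapses to $N_\ell V_\ell$ and yields $\sum_\ell V_\ell/N_\ell$, which is precisely~\eqref{eq:variance_stochastic_probing_generic}. Finally, I would substitute the explicit formula for $V_\ell$ from part (1) of Theorem~\ref{thm:variance_stochastic_probing_quadforms} for Rademacher probing vectors, namely $V_\ell = 2\|\Off([f(A)]_{C_\ell})\|_F^2$, to obtain the stated identity. There is no genuine obstacle here: the only thing one has to make sure of is that independence across colors is being used (this is implicit in the sampling scheme of Definition~\ref{stochastic_probing:def}, but worth stating explicitly so that the cross-color covariances vanish and the variance indeed decomposes as a plain sum).
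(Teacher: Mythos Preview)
Your proposal is correct and matches the paper's approach exactly: the paper does not even spell out a separate proof for this lemma, stating only that it follows from~\eqref{eq:variance_stochastic_probing_generic} and Theorem~\ref{thm:variance_stochastic_probing_quadforms}, which is precisely the combination you carry out.
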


\begin{remark}
Lemma~\ref{lem:variance_Rademacher_probing} tells us that the variance becomes much smaller as $d$ increases if a decay on the entries is present. However, as in the deterministic case with the formula~\eqref{eq:trace_probing_error_deterministic}, the exact value of $V_\ell = \|\Off([f(A)]_{C_\ell})\|_F^2$ on the right hand side is not known, since $f(A)$ is not explicitly available. Bounds on $V_\ell$ for special cases will be given in Section~\ref{subsec:variance_specific}.
\end{remark}

Assuming that a distance-$d$ coloring of $\graph(A)$ is already given, the computational cost of the stochastic probing method is proportional to the number of matrix-vector products (or rather quadratic forms) that need to be evaluated. In the following, we therefore only count the number of quadratic forms to gauge the efficiency of the method. For example, the cost for computing the stochastic probing estimator $\traceP^{\N}_d(f(A))$ covered in Lemma~\ref{lem:variance_Rademacher_probing} is taken to be $N_1+\dots +N_m$ quadratic forms with $f(A)$. 

It is in general not advisable to use the same number of samples for each $C_\ell$: When the partition comes from a distance-$d$ coloring of $\graph(A)$, it is often the case that the variances $\V_\ell$ vary widely. This is, e.g., typically the case if some sets $C_\ell$ in the coloring are much smaller than the others. A cost optimal approach will thus use different sample sizes for different colors, as we develop now. 

Suppose that we aim to obtain an estimator with overall variance at most $\varepsilon^2$. Then, taking into account the expression~\eqref{eq:variance_stochastic_probing_generic} for the variance and assuming that we know the individual variances $\V_\ell$, the overall lowest number of quadratic forms is obtained by solving the optimization problem 
\begin{equation}
    \label{eq:minimization_variance}
        \min\limits_{N_1,\dots,N_m \in \mathbb{N}} \quad N_1+\dots+N_m  \qquad
        \text{s.t. }  \sum_{\ell=1}^m \V_\ell/N_\ell = \varepsilon^2.
\end{equation}
While the discrete problem~\eqref{eq:minimization_variance} is difficult to solve, it is easy to do so if one relaxes it by allowing $N_\ell\geq 0$ to be real. Similar optimization problems occur in general multi-level Monte Carlo methods and have been solved before; see, e.g.,~\cite{FrommerMultilevel22,giles2015multilevel,hallman2022multilevel}. The solution of the relaxed version of~\eqref{eq:minimization_variance} is given by 
\begin{equation}
    N_\ell = \mu \sqrt{\V_\ell}, \quad \mu = \varepsilon^{-2}\sum_{\ell=1}^m \sqrt{\V_\ell}. \label{eq:minimization_variance_solution}
\end{equation}
To obtain integer numbers of samples, one can simply pick the ceiling of the values, $\lceil N_\ell\rceil$, in \eqref{eq:minimization_variance_solution}. 

As discussed before, the variances $\V_\ell$ required for computing~\eqref{eq:minimization_variance_solution} are generally unknown in practice. In some cases it is possible to bound them a priori---as we will discuss in Section~\ref{subsec:variance_specific}---or they can be estimated on-the-fly during the computation---as is, e.g., described in~\cite{hallman2022multilevel} in the context of stochastic multilevel trace estimation methods.

\subsection{Tail bounds for the stochastic probing estimator}\label{subsec:tailbounds}

According to the central limit theorem, for large values of the $N_\ell$, the stochastic estimator $\traceP_{d}^{\N}(f(A))$ for the trace is approximately normally distributed with mean $\trace(f(A))$ and standard deviation  $\sigma = \left( \sum_{\ell=1}^m V_\ell / N_\ell \right)^{1/2}$. This gives the approximate tail bound
\[
    \mathbb P\left(  |\trace(f(A)) - \traceP_{d}^{\N}(f(A))| \geq \varepsilon  \right) \lesssim  1- \erf\left( \frac{\varepsilon}{\sqrt{2}\sigma}  \right),
\]
with the error function $\erf(z) =  \tfrac{2}{\sqrt{\pi}} \int_0^z e^{-t^2} dt$.
Solving for $\sigma^2$ results in
\begin{equation} \label{eq:approx_tail_bound}
 \mathbb P\left(  |\trace(f(A)) - \traceP_{d}^{\N}(f(A))| \geq \varepsilon  \right) \leq \delta \quad \mbox{ if } \enspace
  \sum_{\ell=1}^m V_\ell/N_\ell  \, \lesssim \, \frac{\varepsilon^2}{2 (\inverf(1-\delta))^2}.
\end{equation}

The relation \eqref{eq:approx_tail_bound} is not entirely satisfactory, since it does not quantify how ``inexact'' the inequality is and how large the $N_\ell$ should be. It has the advantage, though, that it applies for any probability distribution used for the stochastic probing vectors. For Rademacher vectors we now derive tail bounds for the stochastic probing estimator which do not rely on the central limit theorem and which hold for any choice of the sample sizes $N_\ell$. To this end we recall the following result from~\cite{CortinovisKressner2021} which, actually, is at the basis of Theorem~\ref{the:tailbound_cortinoviskressner}.
\begin{theorem}[Theorem 2 in \cite{CortinovisKressner2021}]\label{thm:tailbound_zerodiag} Let $\vec x$ be a Rademacher vector of length $n$ and let $M\in \mathbb{R}^{n\times n}$ be a nonzero matrix such that $[M]_{ii}=0$ for $i=1,\dots,n$. Then, for all $\varepsilon>0$,
    \begin{equation*}
        \mathbb P\left( |\vec x^T M \vec x|\geq \varepsilon  \right) \leq 2\exp \left(-\frac{\varepsilon^2}{8\|M\|_F^2+8\varepsilon\|M\|_2}\right).
    \end{equation*}
\end{theorem}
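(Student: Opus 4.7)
The plan is to use the Chernoff method: bound $\E[\exp(t\, \vec x^T M \vec x)]$ for $t$ in a suitable range and then apply Markov's inequality to both tails. Because $M$ has zero diagonal, the independence of the $x_i$ yields $\E[\vec x^T M \vec x] = 0$, so this is a mean-zero deviation estimate. As a harmless preliminary reduction, replace $M$ by its symmetric part $(M+M^T)/2$: the quadratic form is unchanged, the diagonal stays zero, and neither $\|M\|_F$ nor $\|M\|_2$ increase.

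The natural way to control the MGF of a mean-zero Rademacher chaos of order two is by \emph{decoupling}. With an independent Rademacher copy $\vec x'$ of $\vec x$, the de la Pe\~na–Gin\'e inequality gives $\E[\Phi(\vec x^T M \vec x)] \leq \E[\Phi(4\,\vec x^T M \vec x')]$ for every convex $\Phi$ (this is where the assumption of zero diagonal is essential, since it turns the form into a genuine off-diagonal chaos). Conditioning on $\vec x'$, the sum $\vec x^T M \vec x' = \sum_i x_i (M\vec x')_i$ is a Rademacher linear combination, so Hoeffding's MGF bound yields
\begin{equation*}
\E[\exp(4t\,\vec x^T M \vec x') \mid \vec x'] \leq \exp\!\bigl(8 t^2 \|M \vec x'\|_2^2\bigr).
\end{equation*}
Taking expectation over $\vec x'$ reduces the problem to bounding the MGF of the PSD quadratic form $\vec x'^T(M^T M)\vec x'$.

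For a PSD matrix $N = M^T M$ with eigenvalues $\lambda_i$ and $\|N\|_2 = \|M\|_2^2$, a Rademacher analogue of the Gaussian chi-squared MGF estimate (proved by diagonalizing $N$ and using $\cosh(u) \leq e^{u^2/2}$ coordinatewise, or directly by Khintchine-type arguments) gives $\E[\exp(s\,\vec x'^T N \vec x')] \leq \prod_i (1 - 2s \lambda_i)^{-1/2}$ for all $s$ with $2s\|N\|_2 < 1$. Using $-\tfrac{1}{2}\log(1-u) \leq u/(2(1-u))$ on each factor and summing yields a bound on the full MGF of the shape
\begin{equation*}
\E[\exp(t\,\vec x^T M \vec x)] \leq \exp\!\left(\frac{C\, t^2 \|M\|_F^2}{1 - C' t^2 \|M\|_2^2}\right).
\end{equation*}
Plugging this into $\mathbb{P}(\vec x^T M \vec x \geq \varepsilon) \leq e^{-t\varepsilon}\E[\exp(t\,\vec x^T M \vec x)]$ and choosing $t$ to balance the two terms produces a Bernstein-type tail with $\|M\|_F^2$ in the sub-Gaussian regime and $\|M\|_2$ in the sub-exponential regime. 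Combining the two regimes into the single denominator gives $\exp(-\varepsilon^2/(8\|M\|_F^2 + 8\varepsilon\|M\|_2))$. Applying the same argument to $-M$ handles the other tail and supplies the factor $2$.

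The main obstacle is accounting for the constants: decoupling contributes a factor of $4$, Hoeffding a factor of $2$, and the logarithmic determinantal bound must be tuned carefully so that the final optimization over $t$ produces exactly the coefficient $8$ in both terms of the denominator. An appealing alternative to the decoupling route would be a martingale-difference expansion of $\vec x^T M \vec x$ along the Doob filtration generated by $x_1,\ldots,x_n$ and a Freedman-type inequality, but this tends to introduce $\|L\|_2$ with $L$ the strict lower-triangular part of $M$ — generally larger than $\|M\|_2$ — so the cleaner route is the decoupled chaos estimate sketched above.
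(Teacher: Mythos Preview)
The paper does not prove this theorem; it is quoted from \cite{CortinovisKressner2021} and used as a black box to derive the subsequent tail bound on the stochastic probing estimator. So there is no proof in the present paper to compare against, and your sketch must be assessed on its own merits.

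Your outline is the standard decoupling route to Hanson--Wright-type inequalities and is sound in structure, but one step has a genuine gap. You claim that $\E[\exp(s\,\vec x'^T N\vec x')]\le\prod_i(1-2s\lambda_i)^{-1/2}$ for Rademacher $\vec x'$ and positive semidefinite $N$ can be obtained ``by diagonalizing $N$ and using $\cosh(u)\le e^{u^2/2}$ coordinatewise.'' That argument is valid for Gaussian vectors, which are rotation-invariant, but fails for Rademacher: after writing $N=Q\Lambda Q^T$, the rotated vector $Q^T\vec x'$ no longer has independent coordinates, so the MGF does not factor over the eigenvalues. The inequality you want is still true, but it needs a different justification --- for instance, introduce an auxiliary standard Gaussian $\vec z$ via the identity $\exp(s\|N^{1/2}\vec x'\|_2^2)=\E_{\vec z}[\exp(\sqrt{2s}\,\vec z^T N^{1/2}\vec x')]$, swap expectations, apply the Rademacher sub-Gaussian bound $\E_{\vec x'}[\exp(\vec a^T\vec x')]\le\exp(\|\vec a\|_2^2/2)$, and you recover exactly the Gaussian chi-squared MGF on the outside. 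The ``Khintchine-type'' alternative you mention is too vague to fill the gap as stated.

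A secondary point: you flag that tracking the constants is ``the main obstacle'' but do not actually carry this through. The decoupling constant, the Hoeffding factor, and the log-determinant estimate must combine and then be optimized over $t$ to yield precisely $8$ in both terms of the denominator; this is not automatic, and it is where most of the work in the cited proof resides. Your route certainly gives the correct Bernstein \emph{shape}, but as written it is a sketch rather than a proof of the stated constant.
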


Using this theorem we can derive an analogue of Theorem~\ref{the:tailbound_cortinoviskressner} for the stochastic probing method.

\begin{theorem}
    Let $A$ be symmetric and let $f$ and $A$ be such that $f(A)$ is nonzero. Let a distance-$d$ coloring $C_1,\dots,C_m$ be given, let $\N=(N_1,\dots,N_m)$ and let $\vec w_\ell^{(s)}$ be Rademacher probing vectors, with $\traceP_{d}^{\N}(f(A))$ the corresponding trace estimate~\eqref{eq:stochastic_probing_definition}. Then
    \begin{equation*}
        \mathbb{P}\left(|\trace(f(A)) - \traceP_{d}^{\N}(f(A)\right)| \geq \varepsilon) 
        \leq 
        2 \exp\left(- \frac{\varepsilon^2}{8\eta_1 + 8\varepsilon \eta_2}\right)
    \end{equation*}
    for every $\varepsilon > 0$, where 
    \begin{equation}\label{eq:M1M2}
        \eta_1 := \sum_{\ell=1}^m \frac{1}{N_\ell}\|  \Off([f(A)]_{C_\ell}) \|_F^2 = \sum_{\ell =1}^m \V_\ell/N_\ell,
        \qquad
        \eta_2 := \max_{\ell=1,\dots,m}\frac{1}{N_\ell}\|\Off([f(A)]_{C_\ell}) \|_2,
    \end{equation}
    with $\V_\ell$ the variances from \eqref{eq:variance_color_Rademacher}.
    In particular, if we choose $N_1,\dots,N_m$ such that 
    \begin{equation}\label{eq:tailbound_probing_M12}
        \eta_1+\varepsilon \eta_2\leq \frac{\varepsilon^2}{8\log \frac{2}{\delta}},
    \end{equation}
    we have $\mathbb{P}\left(|\trace(f(A)) - \traceP_{d}^{\N}(f(A)\right)| \geq \varepsilon) \leq \delta$.
\end{theorem}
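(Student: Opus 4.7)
The plan is to reduce the claim to a Bernstein-type moment generating function (MGF) bound for a sum of independent, zero-mean, zero-diagonal Rademacher quadratic forms. The starting point is the identity~\eqref{eq:identity_quadform_generic}: since $X_i^2 \equiv 1$ for Rademacher variables, it simplifies to
\[
    (\vec w_\ell^{(s)})^T f(A) \vec w_\ell^{(s)} - \trace\!\bigl([f(A)]_{C_\ell}\bigr) \;=\; (\vec y_\ell^{(s)})^T M_\ell \vec y_\ell^{(s)},
\]
where $M_\ell := \Off([f(A)]_{C_\ell}) \in \R^{n_\ell \times n_\ell}$ is symmetric with zero diagonal and $\vec y_\ell^{(s)} := [\vec w_\ell^{(s)}]_{C_\ell} \in \{-1,+1\}^{n_\ell}$. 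Summing over $\ell$ and $s$ expresses the centered estimator $Z := \traceP_d^{\N}(f(A)) - \trace(f(A))$ as a sum of $N_1 + \dots + N_m$ independent, centered random variables, each a zero-diagonal Rademacher quadratic form of the type appearing in Theorem~\ref{thm:tailbound_zerodiag}.

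The proof of Theorem~\ref{thm:tailbound_zerodiag} in~\cite{CortinovisKressner2021} actually establishes, as its key intermediate step, the Bernstein-type MGF estimate
\[
    \log \E\bigl[\exp(t \, \vec x^T M \vec x)\bigr] \;\leq\; \frac{2 \|M\|_F^2 \, t^2}{1 - 4 \|M\|_2 \, t}, \qquad 0 \leq t < \frac{1}{4\|M\|_2},
\]
valid for any Rademacher vector $\vec x$ and symmetric zero-diagonal $M$. Applying this to each summand $\tfrac{1}{N_\ell} (\vec y_\ell^{(s)})^T M_\ell \vec y_\ell^{(s)}$, using independence, and replacing each $\|M_\ell\|_2/N_\ell$ in the denominator by the uniform upper bound $\eta_2$, I obtain
\[
    \log \E[e^{tZ}] \;\leq\; \sum_{\ell=1}^m \sum_{s=1}^{N_\ell} \frac{2 \|M_\ell\|_F^2 \, (t/N_\ell)^2}{1 - 4 (\|M_\ell\|_2/N_\ell)\, t} \;\leq\; \frac{2 \eta_1 \, t^2}{1 - 4 \eta_2 t}, \qquad 0 \leq t < \frac{1}{4\eta_2},
\]
where the inner double sum collapses via $\sum_\ell N_\ell \cdot \|M_\ell\|_F^2/N_\ell^2 = \sum_\ell \|M_\ell\|_F^2/N_\ell = \eta_1$.

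The Chernoff bound then gives $\mathbb{P}(Z \geq \varepsilon) \leq \exp\!\bigl(-t\varepsilon + 2\eta_1 t^2/(1 - 4\eta_2 t)\bigr)$, and the choice $t = \varepsilon/(4\eta_1 + 4\eta_2 \varepsilon)$ (which lies in the admissible range since then $4\eta_2 t < 1$) turns the exponent into $-\varepsilon^2/(8\eta_1 + 8\eta_2 \varepsilon)$ after a short calculation. Replacing $M_\ell$ by $-M_\ell$ preserves all relevant norms, so the same MGF bound yields the same tail bound for $-Z$; a union bound produces the factor $2$ in the statement. For the second assertion, condition~\eqref{eq:tailbound_probing_M12} rearranges to $\varepsilon^2/(8\eta_1 + 8\varepsilon \eta_2) \geq \log(2/\delta)$, which forces $2 \exp(-\varepsilon^2/(8\eta_1 + 8\varepsilon \eta_2)) \leq \delta$. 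The main obstacle I anticipate is extracting the precise MGF inequality from~\cite{CortinovisKressner2021}, since Theorem~\ref{thm:tailbound_zerodiag} is only stated as a tail bound; once this Bernstein-form MGF estimate is isolated, the lifting to the sum via independence, the uniform replacement of the $\ell$-dependent denominator by $1 - 4\eta_2 t$, and the final Chernoff optimisation are essentially the same reduction that turns Theorem~\ref{thm:tailbound_zerodiag} into Theorem~\ref{the:tailbound_cortinoviskressner}.
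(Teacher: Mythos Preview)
Your argument is correct, but the paper takes a shorter route that avoids the obstacle you anticipate. Rather than extracting the intermediate MGF estimate from~\cite{CortinovisKressner2021} and summing over the independent summands, the paper observes that the entire centered estimator can be written as a \emph{single} Rademacher quadratic form: one stacks the vectors $[\vec w_\ell^{(s)}]_{C_\ell}$ into one long Rademacher vector $\vec x$ of length $\widehat N = \sum_\ell n_\ell N_\ell$ and collects the matrices $\tfrac{1}{N_\ell}\Off([f(A)]_{C_\ell})$ (each repeated $N_\ell$ times) into a block-diagonal matrix $M$ with zero diagonal. Then $\traceP_d^{\N}(f(A)) - \trace(f(A)) = \vec x^T M \vec x$, and Theorem~\ref{thm:tailbound_zerodiag} applies directly as a black box, since $\|M\|_F^2 = \sum_\ell N_\ell \cdot \tfrac{1}{N_\ell^2}\|\Off([f(A)]_{C_\ell})\|_F^2 = \eta_1$ and $\|M\|_2 = \max_\ell \tfrac{1}{N_\ell}\|\Off([f(A)]_{C_\ell})\|_2 = \eta_2$.

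The two approaches are equivalent at the MGF level (the MGF of a block-diagonal quadratic form factorizes exactly into the product you compute), so your uniform replacement of the denominators by $1 - 4\eta_2 t$ incurs no loss relative to the paper's bound. The paper's construction simply packages this factorization into the norm identities for block-diagonal matrices, which lets it invoke the tail bound as stated rather than reopening its proof. Your approach has the minor advantage of making the role of independence explicit, but at the cost of the extra step you flagged.
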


\begin{proof}
     Define the block diagonal matrices
    \begin{equation*}
        M_\ell 
        :=
        \frac{1}{N_\ell}
        \left[
            \begin{array}{ccc}
               \Off([f(A)]_{C_\ell})  \\
               & \ddots \\
                &&  \Off([f(A)]_{C_\ell})
            \end{array}
        \right]\in \mathbb{R}^{n_\ell N_\ell \times n_\ell N_\ell}, \enspace \ell=1,\dots,m,
    \end{equation*}
    and
    \begin{equation*}
        M := 
        \left[
            \begin{array}{ccc}
              M_1 \\ 
               & \ddots \\
                &&  M_m 
            \end{array}
        \right]\in \mathbb{R}^{\widehat{N}\times \widehat{N}},
    \end{equation*}
    where $\widehat{N}=\sum_{\ell=1}^m n_\ell N_\ell.$ Then, $\traceP_d^{\N}(f(A))-\trace(f(A))=\vec x^TM\vec x$ with $\vec x$  a Rademacher vector of length $\widehat{N}$ of the form 
    \begin{equation*}
        \vec x = \left[ 
        \begin{array}{c}
             \widehat{\vec w}_1  \\
             \vdots \\
              \widehat{\vec w}_m
        \end{array}
        \right]\in\mathbb{R}^{\widehat{N}}, 
        \quad \text{ where }
        \widehat{\vec w}_\ell = \left[ 
        \begin{array}{c}
            [\vec w_\ell^{(1)}]_{C_\ell}   \\
             \vdots \\{}
            [\vec w_\ell^{(N_\ell)}]_{C_\ell} 
        \end{array}
        \right]\in \mathbb R^{n_\ell N_\ell}, \ell = 1,\dots,m,
    \end{equation*}
    and each $[\vec w_\ell^{(s)}]_{C_\ell}\in \mathbb R^{n_\ell}$ is a Rademacher vector of length $n_\ell$. We conclude by applying Theorem~\ref{thm:tailbound_zerodiag}, noting that $\eta_1=\|M\|_F^2$ and $\eta_2 = \|M\|_2$.
\end{proof}

Similarly to the discussion at the end of Section~\ref{subsec:variance_stochastic_probing}, we now elaborate on how to choose the number of samples $N_\ell, \ell = 1,\dots,m,$ in order to obtain a cost-efficient $(\varepsilon,\delta)$ approximation of the trace, based on inequality~\eqref{eq:tailbound_probing_M12}. To minimize the number $N_1+\dots +N_\ell$ of quadratic forms involved in the computation of \eqref{eq:stochastic_probing_definition}, one can solve
\begin{align}
    \begin{aligned}
    \min\limits_{N_1,\dots,N_m \in \mathbb{N}}  N_1+\dots+ N_m &\qquad
    \text{s.t.} &\enspace \eta_1+\varepsilon \eta_2\leq \varepsilon^2/8\log(2/\delta),
    \end{aligned} \label{eq:minimization_tailbound}
\end{align}
where $\eta_1, \eta_2$ are defined in~\eqref{eq:M1M2}. A way to solve this approximately is to ignore the term $\varepsilon \eta_2$, due to the small factor $\varepsilon$ and since we expect $\varepsilon \eta_2\ll \eta_1$; a similar reasoning in a slightly different context is also used in~\cite{PerssonCortinovisKressner21}. With this assumption, noting also that $\eta_1=\Var[\traceP_d^{\N}(f(A))]$, the problem~\eqref{eq:minimization_tailbound} reduces to
\begin{align*}
    \begin{aligned}
    \min\limits_{N_1,\dots,N_m \in \mathbb{N}} N_1+\dots+ N_m &\qquad
    \text{s.t.} &\enspace \Var[\traceP_d^{\N}(f(A))] = \varepsilon^2/8\log(2/\delta).
    \end{aligned}
\end{align*}
This is the same as \eqref{eq:minimization_variance}, where $\varepsilon^2$ is replaced by $\varepsilon^2/8\log(2/\delta)$. Hence, by readapting \eqref{eq:minimization_variance_solution} to this case, we find
\begin{equation*}
    N_\ell = \lceil \mu \sqrt{\V_\ell}\rceil, \quad \mu = 8\varepsilon^{-2}\log \frac{2}{\delta} \sum_{\ell=1}^m \sqrt{\V_\ell}, \quad \V_\ell = \|\Off([f(A)]_{C_\ell})\|_F^2. 
\end{equation*}

\subsection{A priori bounds for the variance in specific cases}\label{subsec:variance_specific}

We now discuss three specific situations in which we can give a priori bounds on the variances $V_\ell$ and thus, according to Lemma~\ref{lem:variance_Rademacher_probing}, on the variance of the stochastic probing estimator $\traceP^{\N}_d(f(A))$.  These bounds can then, in turn, be used to determine suitable numbers $N_\ell$ of samples based on the tail bounds of Section~\ref{subsec:tailbounds}. An important feature of all three bounds is that they depend {\em only linearly} on the dimensions $n_\ell$. Interpreting the square root of the variance as a measure for the accuracy of the Rademacher probing method, we thus have a {\em sublinear} dependence, proportional to the square root of the dimension. This is an ``order $\tfrac{1}{2}$'' improvement over the non-stochastic probing method, where the accuracy depends linearly on the dimension; see Theorem~\ref{thm:probing_bound}. 

\begin{proposition}\label{prop:bounds_Vl_banded_grid}
  Let $A\in\R^{n\times n}$ be symmetric, let $\graph(A)$ be its graph, and assume that $f(A)$ has the exponential decay property~\eqref{eq:exponential_decay} with constants $c > 0$ and $0 < q < 1$.
\begin{itemize} 
    \item[(i)] If $\graph(A)$ is $\beta$-banded and $\traceP^{\N}_d(f(A))$ is the Rademacher probing approximation~\eqref{eq:stochastic_probing_definition} associated with the distance-$d$ coloring \eqref{eq:coloring_banded}, then  the variances $V_\ell$ from \eqref{eq:variance_color_Rademacher} satisfy
    \begin{equation}
    \label{eq:bound_variance_color_banded}
        \V_\ell\leq n_\ell \cdot 4 c^2\frac{q^{2d}}{1-q^{2d}}, 
    \end{equation} 
    \item[(ii)] If $\graph(A)$ is a regular $D$-dimensional lattice and $\traceP^{\N}_d(f(A))$ is the Rademacher probing approximation \eqref{eq:stochastic_probing_definition} associated with the distance-$d$ coloring \eqref{eq:coloring_lattice},  
    then the variances $V_\ell$ from \eqref{eq:variance_color_Rademacher} satisfy
    \begin{equation}
    \label{eq:bound_variance_color_lattice}
        \V_\ell\leq n_\ell \cdot 4c^2D\Li_{1-D}(q^{2d}). 
    \end{equation}
\end{itemize}
\end{proposition}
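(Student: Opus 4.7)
The plan is to adapt the arguments behind the deterministic bounds in Theorem~\ref{thm:probing_bound} to the variance formula from Theorem~\ref{thm:variance_stochastic_probing_quadforms}. In both parts, the starting point is the estimate
\[
V_\ell \;=\; 2\!\!\sum_{\substack{i,j \in C_\ell \\ i \neq j}}\!\! |[f(A)]_{ij}|^2 \;\leq\; 2c^2\!\!\sum_{\substack{i,j \in C_\ell \\ i \neq j}}\!\! q^{2\,\dist(i,j)},
\]
obtained by combining \eqref{eq:variance_color_Rademacher} with the decay hypothesis \eqref{eq:exponential_decay}. Thus it suffices, for a fixed $i \in C_\ell$, to control $S(i) := \sum_{j \in C_\ell,\, j \neq i} q^{2\dist(i,j)}$ by a quantity independent of $i$ and then multiply by $n_\ell$ and $2c^2$.

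For part (i), the nodes in $C_\ell$ other than $i$ are characterized by $|i-j| = k(d\beta+1)$ for some $k \geq 1$, with at most two admissible $j$ per $k$ (one on each side of $i$). Since in a $\beta$-banded graph each step along an edge changes the index by at most $\beta$, we have $\dist(i,j) \geq |i-j|/\beta > kd$, hence $\dist(i,j) \geq kd$. Summing the resulting geometric series yields
\[
S(i) \;\leq\; 2 \sum_{k=1}^\infty q^{2kd} \;=\; \frac{2 q^{2d}}{1-q^{2d}},
\]
and multiplying by $2c^2 n_\ell$ gives \eqref{eq:bound_variance_color_banded}.

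For part (ii), two lattice nodes $u,v$ share a color under \eqref{eq:coloring_lattice} precisely when $v - u = (d+1)\,m$ for some $m \in \Z^D$, and because the graph distance on a regular lattice coincides with the $\ell^1$ (Manhattan) distance, $\dist(u,v) = (d+1)\,\|m\|_1$. Overcounting by dropping the constraint that $v$ lies inside the finite lattice, I obtain
\[
S(u) \;\leq\; \sum_{m \in \Z^D \setminus \{0\}} q^{2(d+1)\|m\|_1} \;\leq\; \sum_{m \in \Z^D \setminus \{0\}} q^{2d\,\|m\|_1} \;=\; \sum_{k=1}^\infty N_D(k)\, q^{2dk},
\]
where $N_D(k) := |\{m \in \Z^D : \|m\|_1 = k\}|$ and the second inequality uses $q < 1$. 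The crucial combinatorial estimate is $N_D(k) \leq 2D\, k^{D-1}$ for all $k \geq 1$, which I would prove by induction on $D$: the base $D = 1$ is immediate since $N_1(k) = 2$, and the inductive step follows from the recurrence $N_D(k) = N_{D-1}(k) + 2\sum_{i=0}^{k-1} N_{D-1}(i)$ (obtained by conditioning on the last coordinate $m_D$), together with the comparison $\sum_{i=1}^{k-1} i^{D-2} \leq \int_{1}^{k} x^{D-2}\, dx$. Once this bound is in hand, the inner sum is at most $2D \sum_{k=1}^\infty k^{D-1} q^{2dk} = 2D \Li_{1-D}(q^{2d})$, and multiplying by $2c^2 n_\ell$ yields \eqref{eq:bound_variance_color_lattice}.

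The main obstacle is the combinatorial lattice-point estimate $N_D(k) \leq 2D\,k^{D-1}$ in part (ii); the inductive step is elementary but requires slightly different handling for $D = 2$ (where $x^{D-2}$ is constant) and for $D \geq 3$ (where it is increasing), and one must verify that the leading $4k^{D-1}$ from the integral comparison together with the $O(k^{D-2})$ corrections remain below $2D k^{D-1}$. Everything else — the interchange of the two $q$-exponents enabled by $q<1$, the parametrization of same-color pairs by integer vectors $m$, and the geometric-series computation in (i) — is routine.
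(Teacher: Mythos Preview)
Your proof is correct and follows essentially the same route as the paper: bound $V_\ell$ via the decay assumption, fix $i\in C_\ell$, and control the inner sum by counting same-color neighbors at each distance level, then sum a geometric (resp.\ polylogarithm-type) series. The only substantive difference is that for part~(ii) the paper simply cites \cite[Section~4]{Frommer2021} for the fact that at most $2D\gamma^{D-1}$ same-color nodes lie at the $\gamma$th distance level, whereas you supply a self-contained inductive proof of $N_D(k)\le 2Dk^{D-1}$; you are also slightly more explicit than the paper in using $q<1$ to pass from the exponent $(d+1)\|m\|_1$ to $d\,\|m\|_1$.
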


\begin{proof}
    Let $C_1,\ldots,C_m$ be the colors of the coloring in (i) or (ii). From the exponential decay property \eqref{eq:exponential_decay}, we obtain 
    \begin{equation*}
        \V_\ell = 2\sum_{\substack{i,j\in C_\ell \\ i\neq j}} |[f(A)]_{ij}|^2 \leq 2 \sum_{\substack{i,j\in C_\ell \\ i\neq j}} c^2 q^{2\dist(i,j)}.
    \end{equation*}

   In the case of (i), the distance-$d$ coloring \eqref{eq:coloring_banded} has the property that for given $i$ all other nodes in $C_\ell$ have a distance from $i$ which is a multiple $\gamma d$ of $d$, and there are at most two such nodes for each $\gamma \in \mathbb{N}$; cf.~\cite[Section~4]{Frommer2021} for a detailed discussion. This is why for each $i$ we can bound $\sum_{j \in C_\ell, j\neq i} q^{2\dist(i,j)} \leq 2 \sum_{\gamma=1}^\infty q^{2d\gamma} = 2 q^{2d}/(1-q^{2d})$, which gives \eqref{eq:bound_variance_color_banded}. 

   In the case of (ii) and the coloring \eqref{eq:coloring_lattice}, for a given node $i \in C_\ell$ it was shown in \cite[Section~4]{Frommer2021} that all other nodes in $C_\ell$ have again a distance which is a multiple $\gamma d$ of $d$, and for each $\gamma$ there are this time at most $2D\gamma^{D-1}$ nodes which have this distance. 
    Thus
    \[
    \V_\ell = 2 \sum_{i \in C_\ell} \sum_{j \in C_\ell, j\neq i} |[f(A)]_{ij}|^2 \leq 2n_\ell \sum_{\gamma=1}^\infty  2D\gamma^{D-1} \cdot c^2 q^{2d\gamma} =  n_\ell \cdot 4 c^2\Li_{1-D}(q^{2d}),
    \]
   which is \eqref{eq:bound_variance_color_lattice}. 
\end{proof}

The third situation that we consider is when $[f(A)]_{ij}$ has constant sign for $\dist(i,j) > d$. Here, we can establish a bound using the polynomial approximation error $E_d(f,[a,b])$.

\begin{proposition}
\label{prop:variance_constant_sign}
Let $A\in \R^{n\times n}$ be symmetric with $\sigma(A)\subset[a,b]$ and let $C_1,\dots,C_m$ be a distance-$d$ coloring of $\graph(A)$. Let $\traceP^{\N}_d(f(A))$ be the Rademacher probing approximation~\eqref{eq:stochastic_probing_definition} associated with this coloring. Suppose that $[f(A)]_{ij}$ has constant sign for all $i,j$ with $\dist(i,j)>d$. Then
    \begin{equation}
    \label{eq:bound_variance_color_polyapprox}
        \V_\ell \leq n_\ell \cdot  4 \big(E_d(f,[a,b])\big)^2.
    \end{equation}
\end{proposition}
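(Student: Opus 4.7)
The plan is to exploit two complementary facts: (a) if $p_d$ is any polynomial of degree at most $d$, then $[p_d(A)]_{ij}=0$ whenever $\dist(i,j)>d$, because entries of $A^k$ can be nonzero only between nodes at distance at most $k$; and (b) whenever $\sigma(A)\subset[a,b]$, we have $\|f(A)-p_d(A)\|_2\le \max_{x\in[a,b]}|f(x)-p_d(x)|$ for any symmetric $A$. Since $C_\ell$ is a color of a distance-$d$ coloring, every pair $i\neq j$ in $C_\ell$ satisfies $\dist(i,j)>d$, so after choosing $p_d$ to be a best uniform approximant we get the entrywise bound $|[f(A)]_{ij}|=|[f(A)-p_d(A)]_{ij}|\le E_d(f,[a,b])$ for all such pairs. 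This gives a pointwise bound, but only a quadratic-in-$n_\ell$ estimate if used naively. The constant-sign hypothesis is what converts it into a linear one.

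The key observation is that, under the constant-sign assumption on the off-(distance-$d$) entries,
\[
\sum_{\substack{i,j\in C_\ell\\ i\neq j}}|[f(A)]_{ij}|
\;=\;\Bigl|\sum_{\substack{i,j\in C_\ell\\ i\neq j}}[f(A)]_{ij}\Bigr|,
\]
so that an $\ell^1$-type sum can be estimated by a single signed quantity. I would then rewrite this signed quantity using the deterministic probing vector $\vec v_\ell=\sum_{i\in C_\ell}\vec e_i$ (cf.\ \eqref{eq:probing_vectors}). Since $[p_d(A)]_{ij}=0$ for $i\neq j$ in $C_\ell$,
\[
\vec v_\ell^T(f(A)-p_d(A))\vec v_\ell
=\sum_{i\in C_\ell}[f(A)-p_d(A)]_{ii}+\sum_{\substack{i,j\in C_\ell\\ i\neq j}}[f(A)]_{ij},
\]
so
\[
\Bigl|\sum_{\substack{i,j\in C_\ell\\ i\neq j}}[f(A)]_{ij}\Bigr|
\le |\vec v_\ell^T(f(A)-p_d(A))\vec v_\ell|
+\sum_{i\in C_\ell}|[f(A)-p_d(A)]_{ii}|.
\]
Both terms are bounded by $n_\ell\|f(A)-p_d(A)\|_2\le n_\ell E_d(f,[a,b])$ (the first via $\|\vec v_\ell\|_2^2=n_\ell$, the second termwise), yielding $\sum_{i\neq j\in C_\ell}|[f(A)]_{ij}|\le 2n_\ell E_d(f,[a,b])$.

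The final step combines the pointwise bound with the $\ell^1$ bound:
\[
V_\ell=2\!\!\sum_{\substack{i,j\in C_\ell\\ i\neq j}}\!\!|[f(A)]_{ij}|^2
\le 2\,E_d(f,[a,b])\!\!\sum_{\substack{i,j\in C_\ell\\ i\neq j}}\!\!|[f(A)]_{ij}|
\le 4n_\ell\,\bigl(E_d(f,[a,b])\bigr)^2,
\]
which is exactly \eqref{eq:bound_variance_color_polyapprox}. The main obstacle to anticipate is convincing oneself that the constant-sign hypothesis really is essential: without it, one only gets the entrywise bound $|[f(A)]_{ij}|\le E_d$, and summing $|[f(A)]_{ij}|^2$ trivially over $\O(n_\ell^2)$ pairs ruins the linear-in-$n_\ell$ scaling. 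The trick is to use constant sign to collapse the sum of moduli into a single bilinear form, which is then controlled by the spectral norm of $f(A)-p_d(A)$ in a way that is insensitive to the number of pairs.
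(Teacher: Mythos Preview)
Your proof is correct and follows essentially the same approach as the paper's: both pull one factor of $E_d(f,[a,b])$ out of the sum $\sum_{i\neq j}|[f(A)]_{ij}|^2$ via the entrywise bound $|[f(A)]_{ij}|\le E_d$, then use the constant-sign hypothesis to collapse $\sum|[f(A)]_{ij}|$ into $\bigl|\sum[f(A)]_{ij}\bigr|$, and finally bound this via the decomposition $\vec v_\ell^T(f(A)-p_d(A))\vec v_\ell=\sum_{i\in C_\ell}[f(A)-p_d(A)]_{ii}+\sum_{i\neq j\in C_\ell}[f(A)]_{ij}$ together with $\|f(A)-p_d(A)\|_2\le E_d$. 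The only cosmetic difference is that the paper first phrases the signed sum as the deterministic probing error $\trace([f(A)]_{C_\ell})-\vec v_\ell^T f(A)\vec v_\ell$ before introducing $p_d$, whereas you introduce $p_d$ from the outset; the two are algebraically identical.
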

\begin{proof}
Denote by $\vec v_\ell$ the deterministic probing vector associated with the color $C_\ell$, $\ell=1,\dots, m$, so that we have
\begin{equation*}
\label{eq:partial_probing_error_deterministic}
    \trace([f(A)]_{C_\ell})-\vec v_\ell^T f(A) \vec v_\ell
    = 
    -\sum_{\substack{i, j \in C_\ell \\ i \neq j}} [f(A)]_{ij}.
\end{equation*}
By assumption, all terms on the right hand side have the same sign,
which gives
\[
\sum_{\substack{i,j\in C_\ell \\ i\neq j}}|[f(A)]_{ij}| = 
\big| \sum_{\substack{i,j\in C_\ell \\ i\neq j}}[f(A)]_{ij} \big| = 
\left| \trace([f(A)]_{C_\ell})-\vec v_\ell^T f(A) \vec v_\ell \right|,
\]
and thus
    \begin{align}
        \nonumber
        \V_\ell = 2 \sum_{\substack{i,j\in C_\ell\\ i\neq j}}|[f(A)]_{ij}|^2
        &\leq 
        2\max_{\substack{i,j\in C_\ell\\ i\neq j}} |[f(A)]_{ij}| \sum_{\substack{i,j\in C_\ell \\ i\neq j}}|[f(A)]_{ij}|  \\
        & \leq 
        2 \max_{\dist(i,j)>d} |[f(A)]_{ij}| \cdot \left| 
        \trace([f(A)]_{C_\ell})- \vec v_\ell^T f(A) \vec v_\ell \right|.
         \label{eq:bound_variance_color_nonnegative}
    \end{align}
The first factor in \eqref{eq:bound_variance_color_nonnegative} is bounded by a quantity that does not depend on $\ell$,
\begin{equation*}
    \max_{\dist(i,j)>d} |[f(A)]_{ij}|\leq E_d(f,[a,b]);
\end{equation*}
see, e.g.,~\cite[Section~2.1]{Frommer18}. For the second term, let $p_d$ denote the polynomial of best approximation of degree $d$. Then $\trace(p_d(A)) = \vec v_\ell^T p_d(A) v_\ell$, since $[p_d(A)]_{ij} = 0$ if $\dist(i,j) > d$, and thus 
\begin{eqnarray*}
   \left| \trace([f(A)]_{C_\ell}) - \vec v_\ell^T f(A) \vec v_\ell \right| &\leq&  \left| \trace([f(A)]_{C_\ell}) - \trace([p_d(A)]_{C_\ell})\right| + \left|\trace([p_d(A)]_{C_\ell}) - \vec v_\ell^T f(A) \vec v_\ell \right| \\
   &\leq&  \sum_{k \in C_\ell} \left| [f(A)]_{kk}- [p_d(A)]_{kk})\right| + \left| \vec v_\ell^T(p_d(A)-f(A))v_\ell \right|. 
\end{eqnarray*}
Herein, each term in the sum is bounded by $E_d(f,[a,b])$, and for the second term we have 
\[
\left| \vec v_\ell^T(p_d(A)-f(A))\vec v_\ell \right| \leq \|\vec v_\ell\|^2 \cdot E_d(f,[a,b]) = n_\ell \cdot E_d(f,[a,b]).
\]
Using this in \eqref{eq:bound_variance_color_nonnegative} gives \eqref{eq:bound_variance_color_polyapprox}. 
\end{proof}
\begin{remark}\label{rem:Nl_proportionalto_sqrtnl}
When given a fixed budget $N$ of quadratic forms that one wants to invest for trace estimation via stochastic probing, our results lead to a simple heuristic for distributing these across the different colors $C_\ell$:
According to~\eqref{eq:minimization_variance_solution}, the optimal number of samples should be chosen proportional to $\sqrt{V_\ell}$, and under the assumptions of Proposition \ref{prop:bounds_Vl_banded_grid} or Proposition \ref{prop:variance_constant_sign} we further know that $V_\ell$ essentially scales as $\mathcal{O}(n_\ell)$. Thus, it is sensible to choose the numbers of samples as $N_\ell=\operatorname{round}(\nu \sqrt{n_\ell})$, where
\begin{equation*} \label{eq:optimal_Nl}
    \nu=N/\sum_{\ell=1}^m\sqrt{n_\ell},
\end{equation*}
 and $\operatorname{round}(\nu\sqrt{n_\ell})$ denotes the nearest integer to $\nu\sqrt{n_\ell}$. 
\end{remark}

We conclude this section by characterizing  classes of functions and matrices for which $[f(A)]_{ij}$ has a fixed sign for $\dist(i,j) \geq d$, i.e., situations in which Proposition~\ref{prop:variance_constant_sign} applies. We first state a classical result from~\cite{Fiedler83} which treats the case  $d=0$, i.e.\ it gives conditions such that $f(A) \geq 0$ for all $i$ and $j$. Recall that a matrix $A \in\mathbb{R}^{n \times n}$ is called a (possibly singular)  {\em M-matrix} if it can be written as $A = \theta I - B$, where all entries in $B$ are non-negative, $B \geq 0,$ and the spectral radius satisfies $\rho(B) \leq \theta$. If $A$ (and thus $B$) is symmetric, $\rho(B) = \|B\|_2 = \lambda_{\max}(B)$, the largest eigenvalue of $B$; see, e.g.\ \cite{BermanPlemmonsBook}.
\begin{lemma}[\cite{Fiedler83}] \label{lemma:nonnegative_matrix_function}
    Let  $A=\theta I-B\in \R^{n\times n}$ with $B \geq 0$ and $\|B\|_2 \leq \theta$ be a symmetric M-matrix. Suppose that $f(x)$ is continuous over $[\theta-\|B\|_2,\theta+\|B\|_2]$, analytic over $(\theta-\|B\|_2,\theta+\|B\|_2+\varepsilon)$, for some $\varepsilon>0$, and that
    \begin{equation}
        (-1)^k f^{(k)}(\theta) \geq 0\quad \text{for all $k\geq 0$}.  \label{eqn:alternating_derivatives}
    \end{equation}
    Then $f(A)\geq 0$.
\end{lemma}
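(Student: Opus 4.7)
My approach is to combine a Taylor expansion of $f$ around $\theta$ with a scaling-plus-continuity argument that copes with the asymmetry of the analyticity hypothesis. First I would record the structural facts: since $A-\theta I = -B$, we have $(A-\theta I)^k = (-1)^k B^k$ for every $k\geq 0$; since $B$ is symmetric with entry-wise non-negative entries, Perron--Frobenius gives $\rho(B) = \|B\|_2$ and $\sigma(A)\subset [\theta-\|B\|_2,\theta+\|B\|_2]$; and each power $B^k$ is again entry-wise non-negative because $B$ is.

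The main step uses a shrunk matrix. For $\alpha\in[0,1)$, set $A_\alpha := \theta I - \alpha B$. Its spectrum is contained in the compact set $[\theta-\alpha\|B\|_2,\theta+\alpha\|B\|_2]$, which lies strictly inside the open interval $(\theta-\|B\|_2,\theta+\|B\|_2+\varepsilon)$ on which $f$ is analytic. Hence the Taylor series of $f$ centered at $\theta$ has radius of convergence strictly exceeding $\alpha\|B\|_2$, and applying the holomorphic functional calculus at $A_\alpha$ yields
\[
f(A_\alpha)
\;=\; \sum_{k=0}^\infty \frac{f^{(k)}(\theta)}{k!}(A_\alpha - \theta I)^k
\;=\; \sum_{k=0}^\infty \frac{(-1)^k f^{(k)}(\theta)}{k!}\,\alpha^k\, B^k.
\]
Every coefficient $\frac{(-1)^k f^{(k)}(\theta)}{k!}\alpha^k$ is non-negative by hypothesis~\eqref{eqn:alternating_derivatives}, and every $B^k$ is entry-wise non-negative; hence each partial sum is entry-wise non-negative, and so is its limit: $f(A_\alpha)\geq 0$ entry-wise.

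To finish, I would let $\alpha\to 1^-$. The matrices $A_\alpha$ converge in norm to $A$, and their spectra converge to $\sigma(A)\subset[\theta-\|B\|_2,\theta+\|B\|_2]$, on which $f$ is assumed continuous. By the standard continuity of the matrix-function map on symmetric matrices with spectra in a common compact set where $f$ is continuous, $f(A_\alpha)\to f(A)$ entry-wise, and the entry-wise inequality $f(A_\alpha)\geq 0$ passes to the limit to give $f(A)\geq 0$.

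The main obstacle is that the analyticity interval is open at the left end $\theta-\|B\|_2$, which is potentially an eigenvalue of $A$. This prevents a direct application of the Taylor series at $A$, since the scalar radius of convergence at $\theta$ could equal rather than strictly exceed $\|B\|_2$. The scaling detour via $A_\alpha$ circumvents this: the spectrum of $A_\alpha$ sits strictly inside the analyticity region, the Taylor series converges unconditionally there, and the non-negativity statement is recovered at $\alpha=1$ by a continuity argument that uses only continuity of $f$ on the closed spectral interval.
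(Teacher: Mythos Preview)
The paper does not give its own proof of this lemma; it is quoted as a classical result from \cite{Fiedler83} and used as a black box. So there is no in-paper argument to compare against, and I can only assess the correctness of your proposal.

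The overall architecture---Taylor expansion at $\theta$, entrywise non-negativity of the partial sums via $B^k\geq 0$, and a continuity/limit argument to pass from $A_\alpha$ to $A$---is the right one, and the final continuity step is fine. There is, however, a genuine gap in the sentence ``Hence the Taylor series of $f$ centered at $\theta$ has radius of convergence strictly exceeding $\alpha\|B\|_2$.'' Real-analyticity of $f$ on the open interval $(\theta-\|B\|_2,\theta+\|B\|_2+\varepsilon)$ alone does \emph{not} force the radius of convergence at $\theta$ to be at least $\|B\|_2$; think of $x\mapsto 1/(1+(x-\theta)^2)$, which is real-analytic on all of $\R$ yet has radius of convergence $1$ at $\theta$. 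Without that radius bound your matrix Taylor series for $f(A_\alpha)$ need not converge, and the whole argument collapses.

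The claim is nevertheless true here, but its justification must use the sign hypothesis~\eqref{eqn:alternating_derivatives}. Put $g(t)=f(\theta-t)$, so that $g^{(k)}(0)=(-1)^k f^{(k)}(\theta)\geq 0$ and the Taylor coefficients of $g$ at $0$ are non-negative. If the radius of convergence $R$ were smaller than $\|B\|_2$, then by Pringsheim's theorem $t=R$ would be a singular point of the holomorphic function $\sum_k a_k z^k$; but $g$ is real-analytic at $t=R\in(0,\|B\|_2)$, hence extends holomorphically to a complex neighbourhood of $R$, and by the identity theorem this would continue $\sum_k a_k z^k$ past $R$, a contradiction. Thus $R\geq \|B\|_2>\alpha\|B\|_2$, and with this addition your proof goes through.
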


This gives rise to the following proposition where we assume that the derivatives of $f$ alternate their sign  only beyond the $d$th derivative.  

\begin{proposition} \label{prop:constant_sign_constrained}
Let $A$ and $f$ be as in Lemma \ref{lemma:nonnegative_matrix_function}, except that condition \eqref{eqn:alternating_derivatives} is replaced by
\begin{equation*}
    (-1)^kf^{(k)}(\theta)\geq 0\quad \text{for $k\geq d$},
\end{equation*}
where $d$ is a positive integer. Then
\begin{equation*}
    [f(A)]_{ij}\geq 0 \quad \text{for $\dist(i,j)\geq d$}.
\end{equation*}
\end{proposition}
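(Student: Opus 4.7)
The plan is to adapt Fiedler's argument from Lemma~\ref{lemma:nonnegative_matrix_function} by splitting the Taylor series of $f$ at $\theta$ into its ``low order'' part, which vanishes off the band of width $d$ in $\graph(A)$, and a ``tail'' part, whose signs are controlled by the new hypothesis.

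First I would write $A - \theta I = -B$ with $B \geq 0$ and, under the analyticity assumption on $f$, expand
\begin{equation*}
f(A) \;=\; \sum_{k=0}^{\infty} \frac{f^{(k)}(\theta)}{k!}(A-\theta I)^{k}
    \;=\; \sum_{k=0}^{\infty} \frac{(-1)^{k}f^{(k)}(\theta)}{k!}\, B^{k}.
\end{equation*}
As in Fiedler's proof, the legitimacy of this expansion on $\sigma(A) \subset [\theta - \|B\|_2, \theta + \|B\|_2]$ is exactly the reason the analyticity hypothesis is stated on an interval that contains this spectrum (and slightly beyond it); I would invoke the same justification used in Lemma~\ref{lemma:nonnegative_matrix_function} rather than redo it.

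Next I would split the series at $k=d$:
\begin{equation*}
f(A) \;=\; \underbrace{\sum_{k=0}^{d-1} \frac{(-1)^{k}f^{(k)}(\theta)}{k!}\, B^{k}}_{=: \, P(B)} \;+\; \underbrace{\sum_{k=d}^{\infty} \frac{(-1)^{k}f^{(k)}(\theta)}{k!}\, B^{k}}_{=: \, R(B)}.
\end{equation*}
The first piece $P(B)$ is a polynomial in $B$ of degree at most $d-1$. Since $B$ has the same off‑diagonal sparsity pattern as $A$, we have $[B^{k}]_{ij}=0$ whenever $\dist(i,j) > k$; in particular $[B^{k}]_{ij}=0$ for $k=0,\dots,d-1$ as soon as $\dist(i,j) \geq d$. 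Consequently $[P(B)]_{ij} = 0$ for such $(i,j)$. For the remainder $R(B)$, the new hypothesis $(-1)^{k}f^{(k)}(\theta) \geq 0$ for $k \geq d$ together with $B \geq 0$ (so $B^{k} \geq 0$ entrywise) makes every summand entrywise non‑negative, hence $R(B) \geq 0$ entrywise. Combining both observations gives $[f(A)]_{ij} = [R(B)]_{ij} \geq 0$ whenever $\dist(i,j) \geq d$, which is the claim.

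The one genuinely delicate point is the convergence of the Taylor tail $R(B)$ in the matrix sense on all of $\sigma(A)$; this is the same analytic issue that appears in Fiedler's original lemma and I would simply appeal to the hypothesis of analyticity on $(\theta-\|B\|_2,\theta+\|B\|_2+\varepsilon)$ exactly as he does, rather than re-prove it. Everything else—the vanishing of low-degree polynomials in $B$ outside the distance-$d$ pattern, and the non-negativity of $B^k$—is a clean graph-theoretic/matrix bookkeeping step.
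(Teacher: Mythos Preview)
Your proof is correct and follows essentially the same route as the paper: split the Taylor expansion of $f$ about $\theta$ into a degree-$(d{-}1)$ polynomial part (which vanishes at entries with $\dist(i,j)\geq d$) and a tail whose sign is controlled by the derivative hypothesis. The only cosmetic difference is that the paper packages the nonnegativity of the tail by re-invoking Lemma~\ref{lemma:nonnegative_matrix_function} for the remainder function $g(x)=f(x)-p_{d-1}(x)$, whereas you inline that argument by observing directly that each term $\tfrac{(-1)^k f^{(k)}(\theta)}{k!}B^k$ is entrywise nonnegative.
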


\begin{proof}
    We consider the Taylor series centered at $\theta$, writing it in terms of $x \in (\theta-\|B\|,\theta+\|B\|)$. Then
    \begin{equation*}
        f(x) = \sum_{k=0}^{\infty} c_k x^k=p_{d-1}(x)+g(x),
    \end{equation*}
    where $p_{d-1}(x)=\sum_{k=0}^{d-1} c_k x^k$ and $g(x)=\sum_{k=d}^{\infty} c_k x^k$.
    Since $[A^k]_{ij}=0$ for $k< \dist(i,j)$, we have that $[p_{d-1}(A)]_{ij}=0$ if $d \leq \dist(i,j)$. Moreover, 
    \begin{equation*}
        g^{(k)}(\theta) = 
        \begin{cases}
            0\quad &\text{if $k< d$}\\
            f^{(k)}(\theta)\quad &\text{if $k\geq d$}
        \end{cases},
    \end{equation*}
    so from Lemma \ref{lemma:nonnegative_matrix_function} we get that $g(A)\geq 0$. Finally,
    \begin{equation*}
        [f(A)]_{ij}=[p_{d-1}(A)]_{ij}+[g(A)]_{ij}=[g(A)]_{ij}\geq 0
    \end{equation*}
    for $\dist(i,j) \geq d$.
\end{proof}

Of course, if the assumptions on $f$ in Proposition~\ref{prop:constant_sign_constrained} hold for $-f$ instead of $f$, we obtain $[f(A)]_{ij} \leq 0 $ for $\dist(i,j) \geq d$.  

\begin{remark}\label{rem:situations_with_constant_sign}
Lemma~\ref{lemma:nonnegative_matrix_function} or Proposition~\ref{prop:constant_sign_constrained} can be applied for the following functions $f$ of a symmetric M-matrix $A$:
\begin{enumerate}
    \item Completely monotone functions, i.e., function whose derivatives satisfy $(-1)^kf^{(k)}(x)\geq 0, k = 0,1,\dots$ for all $x\in(0,\infty)$. In that case, Lemma~\ref{lemma:nonnegative_matrix_function} implies that $f(A)\geq 0$. 
    \item Bernstein functions, i.e., nonnegative functions $f$ on $(0,\infty)$ with completely monotone derivative $f^\prime$. By applying Proposition \ref{prop:constant_sign_constrained} to $-f$ we get $[f(A)]_{ij}\leq 0$ for $\dist(i,j)\geq 1$, i.e. for $i\neq j$. 
    \item The entropy function $f(x)=-x\log x$. In fact, for $\theta > 0$ we have  $f^{(k)}(\theta)=(-1)^{k-1}\theta^{1-k}(k-2)!$ for $k\geq 2$, so if $A = \theta I -B$ with $B \geq 0$ is an M-matrix, by applying Proposition \ref{prop:constant_sign_constrained} to $-f$ we get $[f(A)]_{ij}\leq 0$ for $\dist(i,j)\geq 2$. If $\theta\leq \exp(-1)$ we also get $f'(\theta)=-\log \theta -1\geq 0$, so that in this case we even have $[f(A)]_{ij}\leq 0$ for $\dist(i,j)\geq 1$, i.e. for $i\neq j$. 
    \item Fractional powers with exponent larger than $1$, i.e., $f(x)=x^\alpha$ with $\alpha >1$. If $d=\lceil \alpha\rceil$, we get
    \begin{align*}
        \begin{cases}
            f^{(k)}(x)&\geq 0 \quad \text{for $k<d$},\\
            (-1)^{k+d}f^{(k)}(x) &\geq 0\quad \text{for $k\geq d$}.
        \end{cases}
    \end{align*}
    Hence, if $d$ is even, we get $[f(A)]_{ij}\geq 0$ for $\dist(i,j)\geq d$, and if $d$ is odd we get $[f(A)]_{ij}\leq 0$ for $\dist(i,j)\geq d$.
    \item $f(x) = x\exp(-x)$, for which $f^{(k)}(x) = (-1)^{k} (x-k)\exp(-x)$. For $d = \lceil\theta\rceil$, we get that $(-1)^{k+d+1}f^{(k)}(\theta)\geq 0$ for $k\geq d$. Therefore, if $d$ is even, we get $[f(A)]_{ij}\leq 0$ for $\dist(i,j)\geq d$, and if $d$ is odd we get $[f(A)]_{ij}\geq 0$ for $\dist(i,j)\geq d$.
\end{enumerate}
\end{remark}

In all situations just outlined we also have that stochastic probing with only one sample per color is never worse than deterministic probing (which has the same computational cost).

\begin{proposition} \label{prop:stochastic_determinsitic_comp} Assume that $A$ and $f$ satisfy the assumptions of Proposition~\ref{prop:variance_constant_sign} with $C_1,\dots,C_m$ a distance-$d$ coloring of $\graph(A)$. Denote by $\traceP_d^{\mathbbm{1}}(f(A))$ the stochastic probing approximation of the trace using just one Rademacher vector for each color, i.e.\ $\N = (1,\ldots,1)$ and by $\traceP_d(f(A))$ the deterministic probing approximation defined in \eqref{eq:deterministic_probing_approximation} with respect to the same distance-$d$ coloring. Then
\[
| \traceP_d^{\mathbbm{1}}(f(A)) - \trace(f(A)) | \leq | \traceP_d(f(A)) - \trace(f(A)) |.
\]
\end{proposition}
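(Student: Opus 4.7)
The plan is to write out both errors explicitly and then observe that the deterministic probing error is, under the constant-sign hypothesis, exactly equal to what the triangle inequality gives when applied to the stochastic error; this forces the inequality.

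First, I would recall that if $\vec w_\ell = \sum_{i\in C_\ell} X_i \vec e_i$ is a single Rademacher probing vector, then by Proposition~\ref{prop:stochastic_probing_error} (and using $X_k^2 = 1$),
\begin{equation*}
    \vec w_\ell^T f(A) \vec w_\ell
    =
    \sum_{k\in C_\ell} [f(A)]_{kk} + \sum_{\substack{i,j\in C_\ell \\ i\neq j}} X_i X_j [f(A)]_{ij}.
\end{equation*}
Summing over $\ell = 1,\dots,m$ and subtracting $\trace(f(A)) = \sum_{\ell=1}^m \sum_{k \in C_\ell} [f(A)]_{kk}$ yields
\begin{equation*}
    \traceP_d^{\mathbbm{1}}(f(A)) - \trace(f(A))
    =
    \sum_{\ell=1}^m \sum_{\substack{i,j\in C_\ell \\ i\neq j}} X_i X_j [f(A)]_{ij},
\end{equation*}
while \eqref{eq:trace_probing_error_deterministic} already gives the corresponding deterministic identity with $X_i X_j$ replaced by $1$.

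Next I would exploit the coloring: by Definition~\ref{def:distance-d-coloring}, any two distinct $i,j$ in the same color class $C_\ell$ satisfy $\dist(i,j) > d$. Hence the hypothesis of Proposition~\ref{prop:variance_constant_sign} (constant sign of $[f(A)]_{ij}$ for $\dist(i,j) > d$) guarantees that every entry $[f(A)]_{ij}$ appearing in either sum has the same sign. Writing $s \in \{+1,-1\}$ for this common sign, we get $[f(A)]_{ij} = s \cdot |[f(A)]_{ij}|$ throughout, so
\begin{equation*}
    \bigl|\traceP_d(f(A)) - \trace(f(A))\bigr|
    =
    \sum_{\ell=1}^m \sum_{\substack{i,j\in C_\ell \\ i\neq j}} |[f(A)]_{ij}|.
\end{equation*}

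Finally, applying the triangle inequality to the stochastic error and using $|X_i X_j| = 1$ gives
\begin{equation*}
    \bigl|\traceP_d^{\mathbbm{1}}(f(A)) - \trace(f(A))\bigr|
    \leq
    \sum_{\ell=1}^m \sum_{\substack{i,j\in C_\ell \\ i\neq j}} |X_i X_j|\,|[f(A)]_{ij}|
    =
    \sum_{\ell=1}^m \sum_{\substack{i,j\in C_\ell \\ i\neq j}} |[f(A)]_{ij}|,
\end{equation*}
which coincides with $|\traceP_d(f(A)) - \trace(f(A))|$, proving the claim. There is no real obstacle here: the whole argument hinges on the observation that the constant-sign hypothesis makes the triangle inequality tight for the deterministic sum, so any $\pm 1$ re-weighting of the same terms can only decrease the absolute value. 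The only thing to be careful about is the correct translation of the coloring property into the strict inequality $\dist(i,j) > d$ needed to invoke the sign hypothesis.
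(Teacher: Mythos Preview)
Your proof is correct and follows essentially the same approach as the paper: express both errors as double sums over pairs within color classes, apply the triangle inequality to the stochastic sum using $|X_iX_j|=1$, and use the constant-sign hypothesis to recognize the resulting bound as the deterministic error. If anything, your write-up is slightly more careful than the paper's (you explicitly sum over all colors $\ell$ and spell out why $\dist(i,j)>d$ holds for distinct same-color nodes).
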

\begin{proof}
We use the notation and the result from Proposition~\ref{prop:stochastic_probing_error} to obtain 
    \begin{eqnarray*}
 \big| \traceP_d^{\mathbbm{1}}(f(A)) - \trace(f(A)) \big|  =  \big| \sum_{\substack{i,j\in C_\ell \\ i\neq j}} X_i X_j [f(A)]_{ij} \big| \leq  \sum_{\substack{i,j\in C_\ell \\ i\neq j}} | [f(A)]_{ij}|, 
    \end{eqnarray*}
and the assumptions of Proposition~\ref{prop:variance_constant_sign} together with \eqref{eq:trace_probing_error_deterministic} to conclude 
\[
\sum_{\substack{i,j\in C_\ell \\ i\neq j}} | [f(A)]_{ij}| = \big| \sum_{\substack{i,j\in C_\ell \\ i\neq j}}  [f(A)]_{ij} \big| = | \traceP_d(f(A)) - \trace(f(A)) |.
\] 
\end{proof}
Note that this result assumes constant sign in $[f(A)]_{ij}$ only for $\dist(i,j) \geq d$. For special cases and $d=1$ the result has been observed before, for example in \cite[Section~2.3]{aune2014parameter} for the specific case of the log-determinant of certain precision matrices.  

\section{Other variance reduction techniques}\label{sec:hutchpp}
Starting with the Hutch\pp{} method from \cite{MMMW21-Hutch++}, several new variance reduction techniques have been suggested recently.  
Hutch\pp{} is essentially based on the observation that $\trace(B)$ might be well approximated by the trace of a low-rank approximation of $B$ if its eigenvalues decay rapidly,
and this low-rank approximation $B_{\text{lr}}$ is obtained stochastically via one step of a block power iteration with random starting vectors. The trace is then computed as the sum of $\trace(B_{\text{lr}})$, which is obtained directly, and $\trace(B-B_{\text{lr}})$. For the latter term one uses the Hutchinson estimator which now has smaller variance. 

The XTrace estimator \cite{XTrace} is a modification of Hutch\pp{} which uses resampling and implements the {\em exchangeability criterion}, a symmetry which is not present in Hutch\pp{}, but which is necessary for an estimator with optimal variance.  For both, Hutch\pp{} and XTrace, there is a modification in which the low rank approximation is replaced by the Nystr\"om approximation \cite{Nystrom}, resulting in the methods Nystr\"om\pp{} and XNysTrace; see  \cite{PerssonCortinovisKressner21} and \cite{XTrace}.

Without going into more details, we state the following theorem from \cite{XTrace} comparing the variances of the respective trace estimators in the particularly favorable case where the eigenvalues of $B$ decay exponentially. 

\begin{theorem}
    Let $B$ be positive definite and assume that its  eigenvalues satisfy $\lambda_i(B)\leq \alpha^i$ for $i=1,2,\dots$, where $\alpha\in(0,1)$. Denote by \rm{$\trace_N^{\texttt{Method}}$} the estimator using Method \rm{\texttt{Method}} investing $N$ matrix-vector multiplications. Then
    \begin{align*}
        (\Var[\trace_N^{\rm{\texttt{Hutch++}}}(B)])^{1/2} &\leq C_1(\alpha)\alpha^{N/3}; \\
        (\Var[\trace_N^{\rm{\texttt{XTrace}}}(B)])^{1/2} &\leq C_2(\alpha)\alpha^{N/2}; \\
        (\Var[\trace_N^{\rm{\texttt{XNysTrace}}}(B)])^{1/2} &\leq C_3(\alpha)\alpha^{N},
    \end{align*}
    where $C_j(\alpha)$, $j=1,2,3$, depend only on $\alpha$.
\end{theorem}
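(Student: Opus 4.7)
The plan is to analyze each of the three estimators within a common template: write $B = \widehat{B}_k + R_k$, where $\widehat{B}_k$ is a data-dependent low-rank (or Nystr\"om) approximation whose trace is computed exactly from the sketching factors, and $\trace(R_k)$ is then approximated by a Hutchinson-type step on the residual. Since $\trace(\widehat{B}_k)$ is a deterministic function of the random sketch, the variance of the overall estimator equals the variance of the residual step, and by Theorem~\ref{thm:variance_standard} this quantity is controlled by $\mathbb{E}\|R_k\|_F^2$ (divided by the number of Hutchinson samples spent on the residual). The whole argument thus reduces to turning the geometric spectral decay $\lambda_i(B)\leq \alpha^i$ into a bound on $\mathbb{E}\|R_k\|_F^2$.

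For Hutch\pp{} I would follow the analysis of~\cite{MMMW21-Hutch++}: split $N$ matvecs as roughly $N/3$ for the Gaussian sketch $Y=BS$, $N/3$ to form $QQ^T B$ with $Q=\orth(Y)$, and $N/3$ for the Hutchinson estimator applied to $R=(I-QQ^T)B(I-QQ^T)$. A standard randomized-SVD bound yields $\mathbb{E}\|R\|_F^2 \lesssim \sum_{i>N/3}\lambda_i(B)^2 \leq \alpha^{2N/3}/(1-\alpha^2)$, and dividing by the $N/3$ Hutchinson samples and taking a square root delivers the rate $\alpha^{N/3}$, with all remaining factors absorbed into $C_1(\alpha)$.

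For XTrace the matvec budget is split evenly between sketch and residual, and one applies the \emph{exchangeability} device of~\cite{XTrace}: for each Gaussian vector $s_i$ one forms a leave-one-out estimator using the other $N/2-1$ vectors as the sketch, and averages over $i$. The effective sketching rank rises from $N/3$ to $N/2$, and the same Frobenius-tail argument then yields $(\Var)^{1/2}\lesssim \alpha^{N/2}$. For XNysTrace the projection-based approximation $QQ^TB$ is replaced by the Nystr\"om approximation $(BS)(S^TBS)^{\dagger}(S^TB)$, which needs only $BS$ and thus avoids the second matvec pass inherent in the projection variant; for symmetric positive definite $B$ this essentially doubles the effective sketching rank to $N$, producing a Frobenius residual of order $\alpha^N$ and hence $(\Var)^{1/2}\lesssim \alpha^{N}$.

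The main obstacles in carrying this plan out rigorously are (i) the precise probabilistic tail bound on $\mathbb{E}\|R_k\|_F^2$ under Gaussian sketching, which in the XTrace case requires a careful combinatorial analysis of the exchangeability-induced variance developed in~\cite{XTrace}, and (ii) the delicate handling of the pseudoinverse $(S^TBS)^{\dagger}$ for XNysTrace, which becomes subtle when $B$ is ill-conditioned. Since the statement is cited verbatim from~\cite{XTrace} (building on~\cite{MMMW21-Hutch++,PerssonCortinovisKressner21}), I would defer the technical heart of these tail estimates to those references and only verify that the hypothesis $\lambda_i(B)\leq \alpha^i$ fits into the spectral-decay assumptions used there.
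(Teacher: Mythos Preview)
The paper does not prove this theorem at all: it is simply quoted from~\cite{XTrace} (with the Hutch\pp{} bound tracing back to~\cite{MMMW21-Hutch++} and the Nystr\"om variant to~\cite{PerssonCortinovisKressner21}), introduced with ``Without going into more details, we state the following theorem from~\cite{XTrace}''. Your proposal correctly identifies this and, in fact, supplies more detail than the paper itself by sketching the low-rank-plus-residual template and the matvec accounting that drives the exponents $N/3$, $N/2$, $N$; your final paragraph, deferring the technical tail bounds to the original references, is exactly in line with how the paper treats the result.
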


One might think that the stochastic probing method that we discussed in this paper could be further enhanced by using one of these more advanced methods instead of standard Hutchinson on each of the colors $C_\ell$ where we estimate $\trace([f(A)]_{C_\ell})$. This, however, turns out to be counter-productive, since these methods then invest too much effort in computing the low rank approximation for each color block. What we would need for the advanced estimators to be efficient is that for each color $C_\ell$ the block $[f(A)]_{C_\ell}$ can be well approximated by a matrix with small rank compared to the block size $n_\ell$. The coloring approach aims at making the off-diagonal entries of $[f(A)]_{C_\ell}$ small, so if the diagonal elements are not comparably small, there cannot be a low rank approximation to $[f(A)]_{C_\ell}$. And even if there is a rapid decay in the eigenvalues of $f(A)$, we can at best expect to have a similar decay in the eigenvalues of {\em each} color block $[f(A)]_{C_\ell}, l =1,\ldots,m$. But this means that, 
compared to without probing, we have an effort which is $m$ times as large to invest before we retrieve a good low rank approximation for each of the color blocks.

\section{Numerical experiments}\label{sec:numerical_experiments}
We now illustrate our theoretical analysis with several numerical examples. All experiments are done in MATLAB R2020b on a computer with Intel(R) Core(TM) i7-7700HQ CPU and 16 GB RAM.

\subsection{Scaling with $n$ for random geometric graphs}\label{subsec:exp_scaling}
We start with a series of experiments illustrating the results in Section~\ref{subsec:variance_specific} on the more favorable scaling of the error of stochastic vs.\ deterministic probing. 
The matrices used are Laplacians $L$ or adjacency matrices $A$ of random geometric graphs with $n$ nodes. These were obtained via the command \texttt{random\_geometric\_graph(n,radius)} from the package \texttt{NetworkX} \cite{NetworkX} in Python with $n$ ranging from $200$ to $5,\!000$ in steps of $200$. 
In order to keep similar properties for all the matrices, in particular to keep the number $m$ of colors of the distance-$d$ coloring similar for all $n$, we used $\texttt{radius} = \sqrt{\frac{\log n}{\pi n}}$ for all $n$, motivated by the results in \cite[Section 6]{PenroseRGGraphs}.

\begin{figure}
    \includegraphics{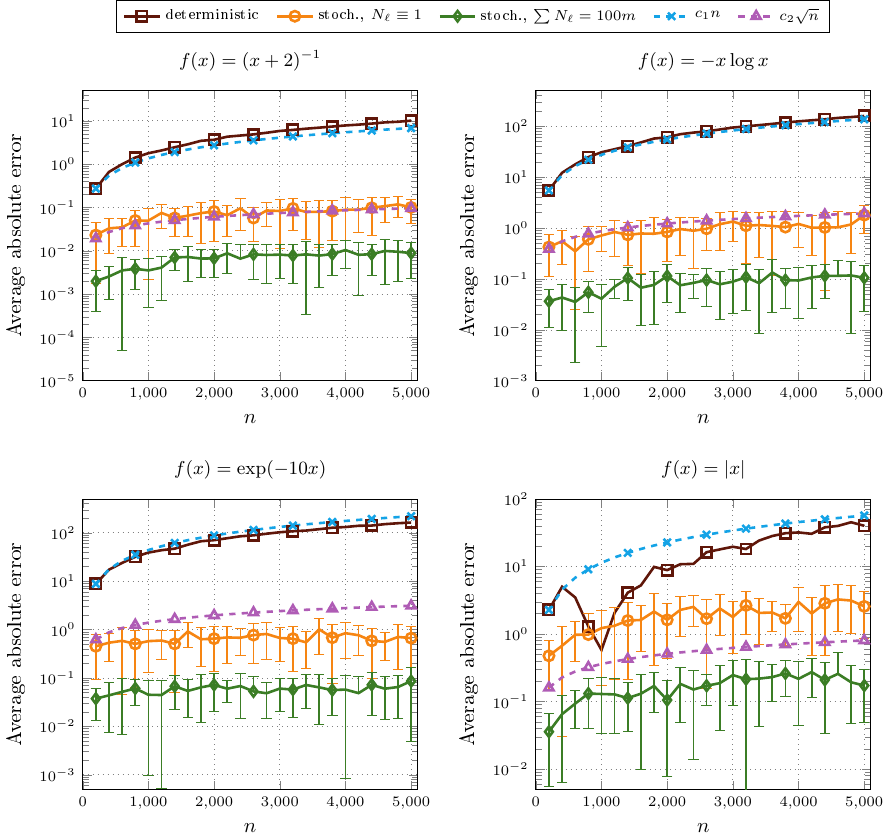}
    \caption{Errors and asymptotic behavior for the three methods in \eqref{eq:three_methods} applied to functions of the Laplacian or adjacency matrix of a random geometric graph for increasing $n$.}
    \label{fig:error_scaling_graphlap}
\end{figure}

We test four different matrix functions: the shifted inverse $f_1(L)=(L+2I)^{-1}$, the entropy function $f_2(L) = -L\log L$,  the exponential $f_3(L) = \exp(-10L)$, where $L$ is the graph Laplacian, and the absolute value $f_4(A)=|A|$ of the adjacency matrix $A$. The trace of $|A|$ is the \emph{graph energy}; cf.~\cite{GraphEnergy}.
Graph Laplacians are M-matrices, since they can be written as $dI-B$ with $d$ the maximum degree in the graph and $B \geq 0$ with $\rho(B) = d$; cf.~\cite[Chapter 6]{BermanPlemmonsBook}. Hence, in view of Remark \ref{rem:situations_with_constant_sign} the assumptions of Proposition~\ref{prop:variance_constant_sign} are satisfied for the functions $f_1(L)$ and $ f_3(L)$ for all distance-$d$ colorings and for $f_2(L)$ if $d \geq 2 $. On the other hand, we have no information on the signs in $f_4(A)$. 

In this experiment, we use a distance-3 coloring for all graphs, which is computed with a simple greedy method. With $f$ a generic symbol for one of the functions $f_1,\ldots,f_4$ and $M^{(n)}$ a generic symbol for the Laplacian or the adjacency matrix of the random geometric graph with $n$ nodes, Figure~\ref{fig:error_scaling_graphlap} reports the quantities 
\begin{equation}    \label{eq:three_methods}
\begin{array}{ll}
  |\trace(f(M^{(n)}))-\traceP_d(f(M^{(n)}))| & \enspace \mbox{(deterministic probing)}, \\
  |\trace(f(M^{(n)}))-\traceP_d^{\mathbbm{1}}(f(M^{(n)}))|    & \enspace \mbox{(stochastic probing, one vector per color)} \\
  |\trace(f(M^{(n)}))-\traceP_d^{\N}(f(M^{(n)}))|  & \enspace \mbox{(stochastic probing with $\N = (N_1,\ldots,N_m)$, where} \\
  & \enspace  \; \mbox{$N_\ell=\nu \sqrt{|C_\ell|}$ and $\sum_{\ell=1}^m N_\ell=100m$).}
\end{array}
\end{equation}
The choice of $N_{\ell}$ in the third case is motivated by Remark \ref{rem:Nl_proportionalto_sqrtnl}, and we expect the error of $\traceP_d^{\N}(f(M^{(n)}))$ to be smaller by one order of magnitude as compared to the error of $\traceP_d^{\mathbbm{1}}(f(M^{(n)}))$. For the stochastic methods, we actually average these errors over 20 runs and also report the sample standard deviation in the plots.

We also display the functions $g_1(n)= c_1 n$ and $g_2(n) = c_2\sqrt{n}$ with suitably chosen constants $c_1$ and $c_2$ which allow to easily appreciate the expected scaling behavior. 

Figure~\ref{fig:error_scaling_graphlap} shows that stochastic probing with just one vector per color performs indeed better---and this by one to two orders of magnitude---than deterministic probing. Moreover, the expected decrease by one order of magnitude for stochastic probing with $N_\ell$ samples per color is clearly visible in all four examples. In addition, for $f_1,f_2$ and $f_4$ we see that the error of stochastic probing (with one vector) scales with the square root of the dimension whereas it scales linearly with the dimension for deterministic probing. This is exactly the scaling of the bounds on the error (or the variance) that we obtained in our analysis in Section~\ref{subsec:variance_specific}, and this analysis applies to $f_1,f_2$ and $f_3$. For $f_1$ and $f_2$ the actual error is very close to the bounds, whereas for $f_3$ the bounds are more pessimistic: the accuracy of stochastic probing appears to almost not depend on the dimension, whereas the growth of the error for deterministic probing appears to be less than linear. For the graph energy, i.e., function $f_4$, the analysis of Section~\ref{subsec:variance_specific} cannot be applied. Interestingly, though, we observe again linear growth of the error for deterministic probing and growth with the square root of the dimension for stochastic probing. 

\subsection{Scaling with the distance $d$}
\begin{figure}
\includegraphics{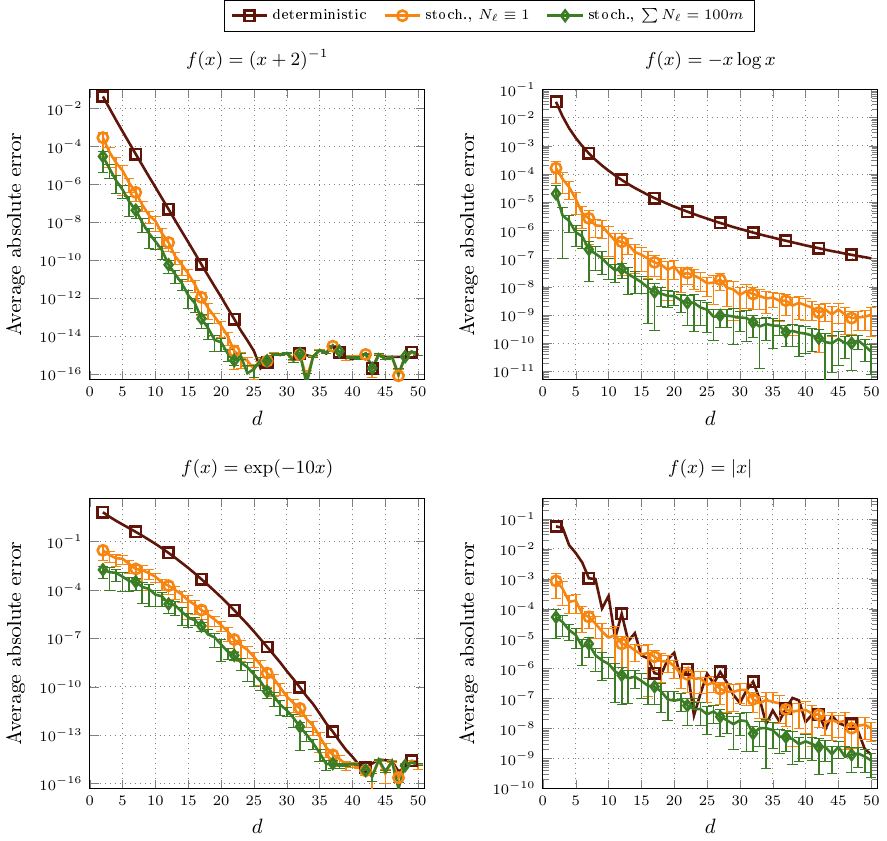}
\caption{Errors and asymptotic behavior for the three methods in \eqref{eq:three_methods} applied to the Laplacian or adjacency matrix of the District of Columbia road network for increasing $d$. }\label{fig:scaling_distance}
\end{figure}
In the next experiment we analyze the scaling with the distance $d$ of the coloring, while keeping the problem size fixed. As a test graph we consider the road network \texttt{DC} of the District of Columbia\footnote{available at \url{http://www.diag.uniroma1.it/challenge9/data/tiger/}},
more specifically its biggest connected component which has $n=9,\!522$ nodes. Road networks typically exhibit a large world structure, hence we expect to need only few colors in a distance-$d$ coloring. As before we consider the test functions $(L+2I)^{-1}$, $-L\log L$, $\exp(-10L)$ and $|A|$, where $L$ and $A$ are the graph Laplacian and the adjacency matrix, respectively.
We again compare deterministic probing, stochastic probing with just one vector per color (i.e.\ $m$ vectors in total) and stochastic probing with $\sum_{\ell=1}^m N_\ell = 100m$ with each $N_\ell$ proportional to $\sqrt{|C_\ell|}$ as motivated in Remark~\ref{rem:Nl_proportionalto_sqrtnl}.  This time we report the average \emph{relative} error and variance over $20$ runs of the stochastic methods; cf.\  Figure \ref{fig:scaling_distance}. 

As expected in view of the results in Section~\ref{sec:stochastic_probing}, for the first three functions the error for stochastic probing scales similarly as a function of $d$ as for deterministic probing  while being smaller by a few orders of magnitude. In the fourth example the error in stochastic probing is smaller for the smaller values of $d$ but becomes increasingly closer to the error of deterministic probing for larger values of~$d$.

\subsection{Comparison with other variance reduction techniques}
In a last series of experiments we consider the other variance reduction techniques outlined in Section~\ref{sec:hutchpp}. Since these techniques use low-rank approximations of $f(A)$, their performance depends on the eigenvalue distribution in $f(A)$. We compare these methods with plain Hutchinson and with stochastic probing with distances distances $d=1,3,5$. The test matrices are $f_1(L)=(L+2I)^{-1}$, $f_2(\widetilde{L})=-\widetilde{L}\log \widetilde{L}$ and $f_3(L)=\exp(-10L)$, where $L$ is again the Laplacian $L$ of the 
% \texttt{minnesota} 
\texttt{DC}
graph and $\widetilde{L}=L/\trace(L)$ is the Laplacian normalized to have unit trace. 
This normalization is common in the context of the graph entropy \cite{BenziRinelliSimunec2022,Braunstein} and ensures that $-\widetilde{L}\log \widetilde{L}$ is positive semidefinite, so that all methods can be applied. The colorings for $d=1,3,5$ computed with the greedy procedure described in~\cite{BenziRinelliSimunec2022,Frommer2021,SchimmelThesis} have $m=4,15,35$ colors, respectively. 

\begin{figure}
\includegraphics{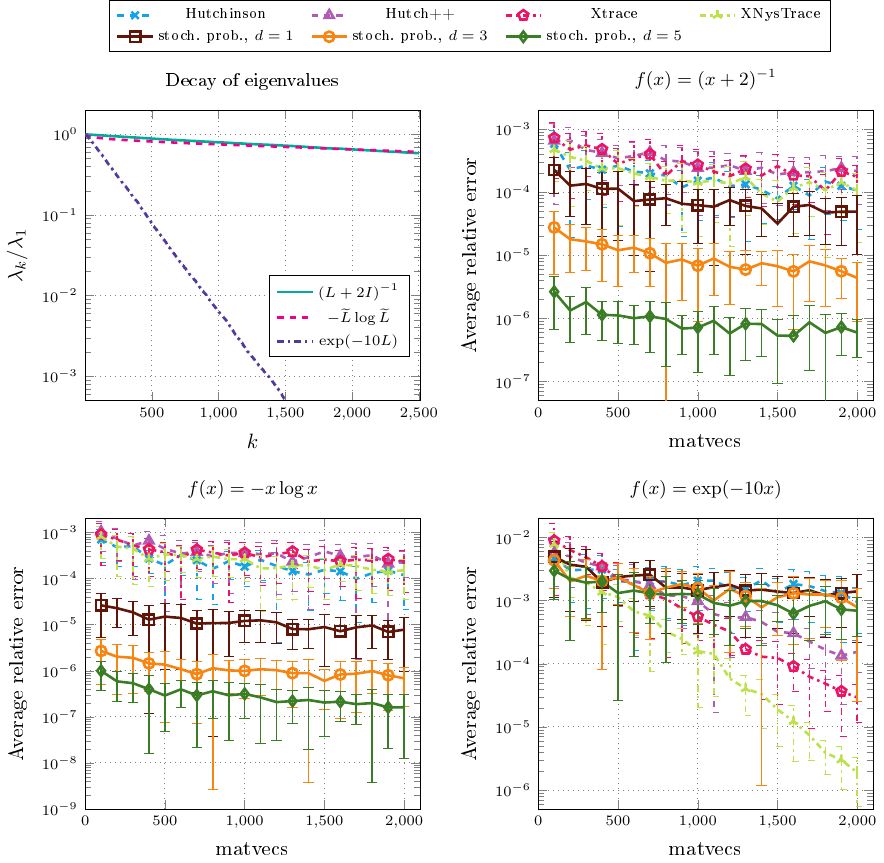}

\caption{Comparison of the accuracy of different trace estimators applied to the Laplacian of the District of Columbia road network as a function of cost (matvecs $f(L)\vec x$). \label{fig:comparison_estimators_matvec}}
\end{figure}

The largest $2,\!500$ eigenvalues of all three test matrices are depicted in the top left plot of Figure~\ref{fig:comparison_estimators_matvec}. The remaining three plots report the error of the different trace estimation methods as a function of the total number of matrix-vector multiplications. Since these evaluations are by far the most costly component in any of the methods, their number quite accurately reflects the computational cost. The essence of Figure~\ref{fig:comparison_estimators_matvec} is that for situations where the eigenvalues do not show a pronounced decay (functions $f_1$ and $f_2$), the techniques using low rank approximations perform significantly worse than stochastic probing. The situation is reversed, though, when the eigenvalues decay rapidly as in $f_3(L)$. The plots for the shifted inverse and the entropy also illustrate that using a larger distance in the coloring gives more accurate results in stochastic probing for the same cost, an observation which should not come as a big surprise.

\section{Conclusions \& Outlook}\label{sec:conclusions}
We analyzed the performance of stochastic probing methods for the trace estimation of functions of sparse matrices. We derived formulas for the variance and tail bounds of this combination between the stochastic Hutchinson's estimator and the probing estimators based on distance-$d$ colorings. For some common cases, for instance when the matrix argument is banded, when the associated graph is a regular grid, or when the entries $[f(A)]_{ij}$ have constant sign for $\dist(i,j)\geq d$, we derived bounds on the variance showing that the error scales on average with the square root of the size, in contrast to the linear scaling exhibited by the deterministic method. Our theory is validated by several numerical experiments where we observed the scaling of the error with the size and compared the performance with other known estimators, indicating when stochastic probing can be the method of choice. Further developments could consist in finding more efficient ways to compute the coloring, depending on the structure of the graph associated with the matrix argument.

\section*{Acknowledgment} 
We would like to thank Alice Cortinovis and Daniel Kressner for inspiring and fruitful discussions on the topic.

\bibliographystyle{amsplain}
\bibliography{biblio}

\end{document}